\theoremstyle{plain}
\newtheorem{proposition}{Proposition}
\newtheorem{lemma}[proposition]{Lemma}
\newtheorem{theorem}[proposition]{Theorem}
\newtheorem{corollary}[proposition]{Corollary}
\newtheorem{conjecture}[proposition]{Conjecture}
\newtheorem{question}[proposition]{Question}
\theoremstyle{definition}
\newtheorem{notation}{Notation}
\newtheorem{remark}{Remark}
\newtheorem{note}{Note}
\def\NN{\mathbb{N}}
\def\W{\mathcal{W}}
\def\T{\mathcal{T}}
\def\P{\mathcal{P}}
\def\N{\mathcal{N}}
\def\G{\mathcal{G}}
\def\S{\mathcal{S}}
\def\A{\mathcal{A}}
\def\B{\mathcal{B}}
\def\C{\mathcal{C}}
\begin{document}

\title{When are translations of $\P$-positions of {\sc Wythoff}'s game $\P$-positions?}

\author{
Aviezri S.\ Fraenkel
\thanks{Department of Computer Science and Applied Mathematics, Weizmann Institute of Science, Rehovot 76100, Israel; email: fraenkel@wisdom.weizmann.ac.il}
\and
Nhan Bao Ho
\thanks{Department of Mathematics and Statistics, La Trobe University, Melbourne 3086, Australia; e-mail: nhan.ho@latrobe.edu.au, nhanbaoho@gmail.com}}

\maketitle

\begin{abstract}
We study the problem whether there exist variants of {\sc Wythoff}'s game whose $\P$-positions, except for a finite number, are obtained from those of {\sc Wythoff}'s game by adding a constant $k$ to each $\P$-position. We solve this question by introducing a class $\{\W_k\}_{k \geq 0}$ of variants of {\sc Wythoff}'s game in which, for any fixed $k \geq 0$, the $\P$-positions of $\W_k$ form the set $\{(i,i) | 0 \leq i < k\}\cup \{(\lfloor \phi n \rfloor + k, \lfloor \phi^2 n \rfloor + k) | n\ge 0\}$, where $\phi$ is the golden ratio. We then analyze a class $\{\T_k\}_{k \geq 0}$ of variants of {\sc Wythoff}'s game whose members share the same $\P$-positions set $\{(0,0)\}\cup \{(\lfloor \phi n \rfloor + 1, \lfloor \phi^2 n \rfloor + 1) | n \geq 0 \}$. We establish several results for the Sprague-Grundy function of these two families. On the way we exhibit a family of games with different rule sets that share the same set of $\P$-positions.
\end{abstract}

\section{Introduction}

{\sc Wythoff}'s game ({\sc Wythoff} in the sequel), introduced by Willem Abraham Wythoff \cite{Wyt}, is a two-pile {\sc Nim}-like game in which two players move alternately, either removing a number of tokens from one pile, or removing an equal number of tokens from both piles. The player who first cannot move loses, and the opponent wins.

A position is called an \textit{$\N$-position} (also known as \textit{winning position}) if the $\N$ext player (the player who is about to move from there) can win. Otherwise, the $\P$revious player wins and the position is called a \textit{$\P$-position} (known as \textit{losing position}). Wythoff \cite{Wyt} showed that the $\P$-positions of this game form the set $ \{(\lfloor\phi n\rfloor, \lfloor \phi^2 n \rfloor) | n \geq 0\}$ in which $\phi = (1+\sqrt{5})/2$ is the golden ratio and $\lfloor . \rfloor$ denotes the integer part. 
\begin{notation}
For the sake of brevity, we set $A_n=\lfloor\phi n\rfloor$ and $B_n=\lfloor\phi^2 n\rfloor$ for every $n\geq 0$.
\end{notation}
Note that $\phi^2=\phi+1$, therefore $B_n=A_n+n$.

Several authors have studied variants of {\sc Wythoff} obtained by either adding some extra moves (these new games are known as \textit{extensions}) \cite{Ext-Res, Heapgame, Howtobeat, Gen-Fra, Adjoining, Gur12, Ho-P, hog, Some-Gen}, or eliminating some legal moves (known as \textit{restrictions}) \cite{Gen-Connell, Nim-Wythoff, Ho-P}. Variants not of these two typical types have also been  widely studied \cite{Geo-ext, Nimhoff, End-Wyt, anew}. Not surprisingly, $\P$-positions of these variants are quite diverse.

In this paper we study the question whether there exist variants of {\sc Wythoff} whose $\P$-positions, except possibly some finite number of them, can be obtained by adding a fixed integer $k\ge 1$ to the $\P$-positions  of {\sc Wythoff}. More precisely, given an integer $k\ge 1$, we seek non trivial variants of {\sc Wythoff} whose $\P$-positions form a set of the form
\[S \cup \{(A_n + k, B_n + k) | n \geq n_0\}\]
for some $n_0 \geq 0$, where $S$ is a finite set of {\sc Wythoff}'s position. Here $n_0$ can be any nonnegative integer. Below we answer this question for $k \geq 1$.

Recall that the \textit{nim-value} (\textit{Sprague-Grundy value}) of a position is defined inductively as follows:  the nim-value of the terminal position (final position) is zero, and the nim-value of a non-terminal position $(a,b)$ is the least integer not in the set of nim-values of the positions directly reachable from $(a,b)$. Note that the set of $\P$-positions of a game is identical to the set of its positions whose nim-values are zero.

In Section 2 we study the family $\{\W_k\}_{k\geq 0}$ of two-pile variants of {\sc Wythoff} in which, for each $\W_k$, each move is one of the following two types:
\begin{itemize}\itemsep0em
\item [$(i)$] removing a number of token from a single pile ({\it Nim move\/}), or
\item [$(ii)$] removing an equal number of tokens from both piles provided that neither of the piles has size less than $k$ after this move ({\it diagonal move}).
\end{itemize}
Note that the diagonal move $(ii)$ of $\W_k$ is a constraint on the {\sc Wythoff} move. When $k = 0$, the game $\W_0$ is {\sc Wythoff}.  When $k > 0$, one cannot move from $(a,b)$ to $(a-m,b-m)$ if $\operatorname{min}(a-m,b-m) < k$.

We first show that for each $\W_k$, the $\P$-positions form the set
\[\{(i,i)|0 \leq i < k\}\cup \{(A_n + k, B_n + k)|n \geq 0 \}.\]
This family of games therefore solves the proposed question. We then explore the sets of those positions whose nim-values are 1 of the family $\{W_k\}_{k \geq 1}$ and, in particular, we prove a recursive relationship between these sets.

Next we examine further modifications of $\{\W_k\}_{k\geq 0}$ in which the diagonal move from each position $(a,b)$, with $a \leq b$, becomes: $(ii')$ removing an equal number $i$ of tokens from both piles such that $a-i \geq k$ and $b-i \geq l$, for some given positive integers $k$ and $l$ with $k \leq l$. We denote this family $\{\W_{k,l}\}$. We prove that the $\P$-positions of the game $\W_{k,l}$, with $k \leq l$, are identical to those of the game $\W_l$. We also formulate a conjecture about an invariance property of the Sprague-Grundy function of members of the family $\{\W_{k,l}\}$.

Section 3 continues the topic of the translation of {\sc Wythoff}'s $\P$-positions, studying a variant of {\sc Wythoff} in which the players must consider the integer ratio of the two entries in each diagonal move. Let $k \geq 0$. We analyze a variant of {\sc Wythoff}, called $\T_k$, obtained as follows: from a position $(a,b)$ with $a \leq b$, one can either
\begin{itemize}\itemsep0em
\item [$(i)$] remove a number of tokens from a single pile, or
\item [$(ii)$] remove an equal number, say $s$, of tokens from both piles provided that $a-s > 0$ and
\[\left |\left\lfloor \frac{b-s}{a-s} \right\rfloor - \left\lfloor \frac{b}{a} \right\rfloor\right| \leq k.\]
\end{itemize}

Note that the diagonal move $(ii)$ is a restriction of the diagonal move of {\sc Wythoff}. In this move, the condition $a-s > 0$ guarantees that the ratio $\lfloor(b-s)/(a-s)\rfloor$ is defined. Thus, when making a diagonal move in $\T_k$, one must ensure that the difference between the ratios of the bigger entry over the smaller entry before and after the move must not exceed $k$.

Consider the special case $k = \infty$. The game $\T_{\infty}$ is the variant of {\sc Wythoff} in which the only restriction is that the diagonal move cannot make any pile empty. 

We now give some examples to illustrate the rule of the game $\T_k$ with some values of $k$. From the position $(5,10)$, one can either reduce any single entry, or reduce the same $s$ from both entries provided that $|\lfloor (10-s)/(5-s) \rfloor - \lfloor 10/5 \rfloor| \leq k$. Table \ref{Ex1} displays the differences on diagonal moves between the games corresponding to $k = 0, 1, 2, 3, 4$.
\begin{table} [ht]
\begin{center}
\begin{tabular}{|c|p{1.5cm}|p{5.5cm}|p{4.5cm}|}
\hline
  $k$     &Original position &$s$: number of tokens that can be removed in the diagonal move &The options enabled by the diagonal move  \\ \hline
  $0$ &(5,10) &1, 2         &(4,9), (3,8) \\               \hline
  $1$ &(5,10) &1, 2, 3      &(4,9), (3,8), (2,7) \\        \hline
  $2$ &(5,10) &1, 2, 3      &(4,9), (3,8), (2,7) \\        \hline
  $3$ &(5,10) &1, 2, 3      &(4,9), (3,8), (2,7) \\        \hline
  $4$ &(5,10) &1, 2, 3, 4   &(4,9), (3,8), (2,7), (1,6) \\ \hline
\end{tabular}
\caption{Possible diagonal moves from (5,10) for the game with $k = 0, 1, 2, 3.$} \label{Ex1}
\end{center}
\end{table}

We analyze the winning strategy of the game $\T_k$, for given $k$. We show that the $\P$-positions of game $\T_k$ form the set
$$\{0,0)\}\cup\{(A_n+1, B_n+1) | n \geq 0 \},$$
which is independent of $k$. We then study the Sprague-Grundy function of the family $\{\T_k\}_{k \geq 0}$. We prove that all games $\T_k$ share the same positions whose nim-values are 1, forming the set
$$\{(0,1)\}\cup\{(A_n+2, B_n+2) | n \geq 0 \}.$$
We state a conjecture regarding an invariance property of the nim-value $g$ between two games $\T_k$ and $\T_l$ with $k\le l$, provided $g \leq k$.

The paper ends with two further questions on the translation of {\sc Wythoff}'s $\P$-positions.

\section{The class $\W_k$}
\subsection{The winning strategy}

We prove the formula for the $\P$-positions of $\W_k$ in this section. Before doing this, let us recall some background on {\sc Wythoff}. The set of positive integers is denoted by $\NN$.

\smallskip
\begin{lemma} \label{Comp} \cite{beatty1}
The sets $\{A_n\}_{n\ge 1}$ and $\{B_n\}_{n\ge 1}$ are complementary, namely,
\begin{align*}
& (\cup_{n\ge 1} A_n)\cup (\cup_{n\ge 1} B_n)=\NN,\\
& (\cup_{n\ge 1} A_n)\cap (\cup_{n\ge 1} B_n)=\emptyset.
\end{align*}
\end{lemma}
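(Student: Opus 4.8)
The statement is the classical Beatty--Rayleigh theorem specialized to the pair $(\phi,\phi^2)$, and indeed the excerpt attributes it to \cite{beatty1}; nevertheless, here is how I would prove it from scratch. The one arithmetic fact that makes everything work is that $\phi$ is irrational and $\tfrac1\phi+\tfrac1{\phi^2}=1$, the latter being immediate from $\phi^2=\phi+1$, since $\tfrac1\phi+\tfrac1{\phi^2}=\tfrac{\phi+1}{\phi^2}=1$. The plan is a counting (density) argument: for every integer $m\ge 1$, show that the sequences $\{A_n\}_{n\ge 1}$ and $\{B_n\}_{n\ge 1}$ together contain exactly $m$ terms that are $\le m$.

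First I would fix $m\ge 1$ and count the terms of each sequence that do not exceed $m$. Because $\phi$ is irrational, $\phi n\notin\Z$ for $n\ge 1$, so $A_n=\lfloor\phi n\rfloor\le m$ is equivalent to $\phi n<m+1$, i.e. to $n<\tfrac{m+1}{\phi}$; since $\tfrac{m+1}{\phi}\notin\Z$ as well, the number of such $n$ is exactly $\lfloor\tfrac{m+1}{\phi}\rfloor$. The identical computation with $\phi^2$ in place of $\phi$ shows that exactly $\lfloor\tfrac{m+1}{\phi^2}\rfloor$ terms of $\{B_n\}$ are $\le m$. Now $\tfrac{m+1}{\phi}+\tfrac{m+1}{\phi^2}=m+1\in\Z$ while neither summand is an integer, and for any two non-integers $x,y$ with $x+y\in\Z$ one has $\lfloor x\rfloor+\lfloor y\rfloor=x+y-1$; hence the total count of terms of $\{A_n\}\cup\{B_n\}$ (with multiplicity) that are $\le m$ equals $(m+1)-1=m$.

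Then I would conclude by induction on $m$. Passing from $m$ to $m+1$ increases this count by exactly $1$, so exactly one term equal to $m+1$ appears among the two sequences, and it appears exactly once. This says precisely that each positive integer lies in exactly one of $\{A_n\}_{n\ge1}$, $\{B_n\}_{n\ge1}$, which is the claimed union and intersection. (Alternatively, one can argue directly without the induction: if some $r$ lay in both sequences, writing $r=\lfloor\phi a\rfloor=\lfloor\phi^2 b\rfloor$ and using irrationality to get $\tfrac r\phi<a<\tfrac{r+1}\phi$ and $\tfrac r{\phi^2}<b<\tfrac{r+1}{\phi^2}$, summing yields $r<a+b<r+1$, contradicting $a+b\in\Z$; a symmetric computation rules out an $r$ in neither sequence.)

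I do not expect a genuine obstacle here, since the lemma is classical; the only thing requiring care is the bookkeeping with floor functions and with strict versus non-strict inequalities. All of that collapses to the single observation that $\phi n$, $\phi^2 n$, $\tfrac{m+1}{\phi}$ and $\tfrac{m+1}{\phi^2}$ are never integers for $n,m\ge1$, so no boundary cases occur — and this is exactly the place where the irrationality of $\phi$ is essential (for a rational ratio the complementarity can fail, so the hypothesis cannot be dropped).
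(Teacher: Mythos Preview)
Your argument is correct and is precisely the classical Beatty--Rayleigh counting proof. The paper itself does not supply a proof of this lemma: it is stated with a citation to \cite{beatty1} and used as a black box, so there is no ``paper's own proof'' to compare against. Your write-up fills that gap cleanly; the only minor point worth making explicit is that each of the sequences $\{A_n\}$ and $\{B_n\}$ is strictly increasing (since $\phi,\phi^2>1$), so that the multiset count $|\{n:A_n\le m\}|+|\{n:B_n\le m\}|$ really does detect repetitions between the two sequences and not within a single one---but this is implicit in your ``with multiplicity'' remark and in the induction step.
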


Recall that in {\sc Wythoff}, the following are the $\P$-positions.
\smallskip
\begin{theorem} \label{W-P} \cite{Wyt}
The $\P$-positions of {\sc Wythoff} form the set
\[ \{(A_n, B_n) | n \geq 0\}. \]
\end{theorem}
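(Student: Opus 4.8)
The plan is to show that the set $S := \{(A_n, B_n)\mid n\ge 0\}$ satisfies the two properties characterizing the set of $\P$-positions: (a) no legal move from a position in $S$ leads to a position in $S$; and (b) from every position not in $S$ there is a legal move to a position in $S$. (Note that $(0,0)=(A_0,B_0)\in S$ is the terminal position.) I treat positions as unordered pairs, writing $(a,b)$ with $a\le b$. The inputs are the identity $B_n=A_n+n$ (hence $j\mapsto B_j-A_j$ is injective), the strict monotonicity of $n\mapsto A_n$ with $A_0=0$, and, above all, Lemma~\ref{Comp}: every positive integer is either $A_n$ for a unique $n\ge1$ or $B_n$ for a unique $n\ge1$, and not both.

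For (a), fix $n\ge1$ (the position $(0,0)$ being terminal). A {\sc Nim} move decreasing $A_n$ to some $a'<A_n$ yields the pair $\{a',B_n\}$; if this equals $\{A_m,B_m\}$, then matching $B_n$ with $B_m$ forces $m=n$ and hence $a'=A_n$, a contradiction, while matching $B_n$ with $A_m$ contradicts Lemma~\ref{Comp}. Decreasing $B_n$ is symmetric, using that $A_n$ is not a $B$-value. A diagonal move yields $(A_n-t,B_n-t)$ with $t\ge1$, which has coordinate difference $n$; if it equals $(A_m,B_m)$, then $m=n$ by injectivity of $j\mapsto B_j-A_j$, forcing $t=0$, a contradiction. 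Hence no move keeps one inside $S$.

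For (b), let $(a,b)\notin S$ with $a\le b$; since $A_0=B_0=0$ we may assume $b\ge1$. By Lemma~\ref{Comp}, either $a=A_n$ for some $n\ge0$ or $a=B_n$ for some $n\ge1$. Suppose first $a=A_n$. If $b>B_n$, a {\sc Nim} move reduces $b$ to $B_n$, reaching $(A_n,B_n)\in S$; the case $b=B_n$ is impossible since then $(a,b)\in S$; and if $a\le b<B_n$, put $d:=b-a$, so $0\le d\le n-1$ (in particular $n\ge1$), and remove $A_n-A_d$ from both piles — a quantity that is positive (as $d<n$) and at most $A_n=a$, so the move is legal, and since $A_n-A_d=b-B_d$ it lands on $(A_d,B_d)\in S$. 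Suppose instead $a=B_n$ with $n\ge1$; then $a=A_n+n>A_n$, so $b\ge a>A_n$, and a {\sc Nim} move reducing $b$ to $A_n$ reaches the pair $\{A_n,B_n\}=(A_n,B_n)\in S$.

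Properties (a) and (b) together identify $S$ with the set of $\P$-positions of {\sc Wythoff}. The only genuinely delicate step is the reachability argument in (b): it must be organized around whether the smaller coordinate is an $A$-value or a $B$-value — which is precisely where Beatty complementarity (Lemma~\ref{Comp}) enters, making the dichotomy exhaustive — and the two sub-cases call for structurally different moves (a diagonal move when $a=A_n$ and $b$ is too small, a {\sc Nim} move on the larger pile when $a=B_n$). The degenerate instances $a=0$, $a=b$, and $d=0$ either reduce to the move to $(0,0)$ or fall under the formulas above, so they need no separate treatment.
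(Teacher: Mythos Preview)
The paper does not supply its own proof of this theorem; it is quoted from Wythoff~\cite{Wyt} as a known background result and then used as input in later arguments (for instance in the proof of Theorem~\ref{W_k-P}). Your proof is correct and follows the classical route: verify the two characterising properties of a $\P$-set using Beatty complementarity (Lemma~\ref{Comp}) together with the identity $B_n-A_n=n$. The organisation of part~(b) around whether the smaller coordinate $a$ is an $A$-value or a $B$-value, with the diagonal move to $(A_d,B_d)$ for $d=b-a$ when $a=A_n$ and $b<B_n$, is exactly the standard argument.
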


We are now able to describe the $\P$-positions for $\W_k$ where $k \geq 1$.

\begin{notation}

We denote by $\S^g_k$ the set of positions whose nim-values are $g$ in any given game. 
\end{notation}
\begin{theorem} \label{W_k-P}
For each $k \geq 0$, the $\P$-positions of $\W_k$ form the set
$$\S^0_k=\{(i,i)|0 \leq i < k\}\cup \{(A_n + k, B_n + k)|n \geq 0 \}.$$
\end{theorem}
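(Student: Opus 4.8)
The plan is to prove that $\S^0_k$ as defined is exactly the set of $\P$-positions of $\W_k$ by the standard two-part argument for combinatorial games: first show that every position in $\S^0_k$ has all of its $\W_k$-options outside $\S^0_k$ (the ``$\P$-to-$\N$'' direction), and second show that every position \emph{not} in $\S^0_k$ has at least one $\W_k$-option lying in $\S^0_k$ (the ``$\N$-to-$\P$'' direction). Together with the fact that $(0,0)\in\S^0_k$ is terminal, this characterizes the $\P$-positions. For $k=0$ this is just Theorem~\ref{W-P}, so we may assume $k\ge 1$; I will also use symmetry in the two coordinates throughout and write a position as $(a,b)$ with $a\le b$.

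First I would record the elementary structure of $\S^0_k$. The ``small diagonal'' part $\{(i,i)\mid 0\le i<k\}$ is closed under diagonal moves in the sense that from $(i,i)$ with $i<k$ one \emph{cannot} make a diagonal move at all (doing so would leave a pile of size $<k$), and a Nim move from $(i,i)$ lands on $(j,i)$ with $j<i$, which is not of the form $(i',i')$ and, since its larger coordinate is $i<k$, is not in the translated-Wythoff part either; hence no option of a small-diagonal position is in $\S^0_k$. For the translated part, I would lean on Theorem~\ref{W-P}: the map $(a,b)\mapsto(a-k,b-k)$ carries $\{(A_n+k,B_n+k)\}$ bijectively onto the Wythoff $\P$-positions. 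A Nim move in $\W_k$ from $(A_n+k,B_n+k)$ corresponds exactly to a Nim move in {\sc Wythoff} from $(A_n,B_n)$ (subtracting from one pile), so by Theorem~\ref{W-P} it cannot land on another Wythoff $\P$-position; and it cannot land in the small-diagonal part because such a move leaves the \emph{other} coordinate unchanged at a value $\ge k$ (one checks $A_n+k,B_n+k\ge k$, with equality in the smaller coordinate only when $n=0$, the position $(k,k)$, whose Nim options $(j,k)$ with $j<k$ again have larger coordinate $k\not<k$). A diagonal move in $\W_k$ from $(A_n+k,B_n+k)$ to $(A_n+k-s,B_n+k-s)$ with $1\le s$ and $A_n+k-s\ge k$, i.e. $s\le A_n$, corresponds to a legal {\sc Wythoff} diagonal move from $(A_n,B_n)$ to $(A_n-s,B_n-s)$, which again by Theorem~\ref{W-P} is not a Wythoff $\P$-position; and $A_n+k-s\ge k$ forces the image out of the small-diagonal part. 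This settles the $\P$-to-$\N$ direction.

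For the $\N$-to-$\P$ direction, take $(a,b)$ with $a\le b$ not in $\S^0_k$ and exhibit a move into $\S^0_k$. The natural case split is on whether $b<k$, whether $a<k\le b$, or $k\le a\le b$. If $b<k$ then $a<k$ too and $a<b$ (else $(a,b)=(i,i)\in\S^0_k$); a single Nim move on the larger pile reaches $(a,a)\in\S^0_k$. If $a<k\le b$, a Nim move on the larger pile reaching $(a,a)$ works when $a<k$—so that case is immediate as well. The substantive case is $k\le a\le b$: here write $a'=a-k$, $b'=b-k$, so $(a',b')$ is a nonnegative {\sc Wythoff} position and, because $(a,b)\notin\S^0_k$, it is not a Wythoff $\P$-position, i.e. not of the form $(A_n,B_n)$. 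By Wythoff's strategy (Theorem~\ref{W-P}) there is a {\sc Wythoff} move from $(a',b')$ to some $(A_n,B_n)$. If that move is a Nim move, it lifts directly to a $\W_k$ Nim move from $(a,b)$ to $(A_n+k,B_n+k)\in\S^0_k$. The one point needing care is a {\sc Wythoff} \emph{diagonal} move from $(a',b')$ to $(A_n,B_n)$, subtracting $s=a'-A_n=b'-B_n>0$: in $\W_k$ this same diagonal subtraction lands on $(A_n+k,B_n+k)$, and since $A_n+k\ge k$ the move is legal in $\W_k$—so it lifts as well. Thus in every case a move into $\S^0_k$ exists.

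The main obstacle, and the step I would be most careful about, is precisely this last lifting argument in the case $k\le a\le b$: one must be sure that the \emph{only} way Wythoff's winning move from $(a',b')$ could fail to lift to $\W_k$ is if it were a diagonal move leaving a pile below $k$ after translation—but translation by $+k$ \emph{raises} the post-move pile sizes back to $\ge k$, so no such failure occurs; the constraint in $\W_k$ is a constraint relative to the translated board, and the translation exactly compensates for it. A secondary subtlety is checking that the translated-Wythoff positions and the small-diagonal block do not accidentally overlap or leak into one another under moves (handled above by tracking which coordinate is fixed by a Nim move and comparing it to $k$). Once these boundary checks are in place, the argument is the routine game-theoretic induction, and I would present it by verifying the two closure properties and invoking the standard lemma that a set containing the terminal position and satisfying both is the set of $\P$-positions.
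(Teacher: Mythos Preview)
Your proof is correct and follows essentially the same approach as the paper's: verify the two standard closure properties by reducing, via the translation $(a,b)\mapsto(a-k,b-k)$, to Theorem~\ref{W-P} for {\sc Wythoff}. Your case split in the $\N$-to-$\P$ direction ($b<k$; $a<k\le b$; $k\le a\le b$) differs slightly from the paper's ($a\le k$; $k<a=b$; $k<a<b$), and you are a bit more explicit about the boundary checks between the small-diagonal block and the translated part, but the substance is identical.
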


\begin{proof}
Let $$\A=\{(i,i)|0 \leq i < k\}\cup \{(A_n + k, B_n + k)|n \geq 0 \}.$$
It suffices to verify that the following two properties hold for $\W_k$:
\begin{itemize}\itemsep0em
\item [$(i)$]  every move from a position in $\A$ cannot terminate in $\A$,
\item [$(ii)$] from every position not in $\A$, there is a move terminating in $\A$.
\end{itemize}

For $(i)$, note that there is no diagonal move between positions of the form $(i,i)$ where $i \leq k$, by the definition of the game $\W_k$. For $n>0$ we have $B_n >A_n$, so there is no move from $(A_n + k, B_n + k)$ to $(i,i)$ with $i \leq k, n>0$. It remains to show that there is no move between two positions of the form $(A_n + k, B_n + k)$. Suppose there is a move $(A_n + k, B_n + k)\to (A_m + k, B_m + k)$ (not necessarily ordered pairs). Then $m<n$. The difference between the amounts taken from the two piles is $n-m > 0$, so this is not a legal move in {\sc Wythoff}, a fortiori not in $\W_k$.

For $(ii)$, let $(a,b) \notin \A$ with $a \leq b$. If $a \leq k$ then necessarily $b > a$, so reducing $b$ to $a$ leads to a position in $\A$. We now consider the case $k < a \leq b$. If $a = b$ then one can move from $(a,b)$ to $(k,k) \in \A$. It remains to consider the case $k < a < b$. Note that $(a-k,b-k) \notin \{(A_n, B_n) | n \geq 0 \}$. By Theorem~2, there exists a move from $(a-k,b-k)$ to some $(A_n, B_n)$ in {\sc Wythoff}. This implies that there exists a move in $\W_k$ from $(a,b)$ to some $(A_n + k, B_n + k) \in \A$.
\end{proof}

Note that for $k=0$ the displayed formula for $\S^0_k$ in Theorem~\ref{W_k-P} gives the $\P$-positions of {\sc Wythoff}, but the proof (at its end) used the known facts about {\sc Wythoff}'s $\P$-positions, though it would be easy to avoid this use.

For any set $S$ and term $l$ we define $S+l=\{s+l|s\in S\}$.

\begin{corollary} \label{P-recursion}
Let $k \geq 0$ and $l > 0$. The set $\S^0_{k+l}$ of $\P$-positions of the game $\W_{k+l}$ can be given recursively in the form
$$\S^0_{k+l}=\{(i,i)|0 \leq i < l\}\cup \{(a+l,b+l)|(a,b) \in \S^0_k\}.$$
\end{corollary}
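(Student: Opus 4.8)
The plan is to derive the recursion directly from the closed form for $\S^0_k$ established in Theorem~\ref{W_k-P}, rather than re-running the $\P$-position verification argument. By that theorem,
\[\S^0_{k+l}=\{(i,i)\mid 0\le i<k+l\}\cup\{(A_n+k+l,\,B_n+k+l)\mid n\ge 0\},\]
and likewise $\S^0_k=\{(i,i)\mid 0\le i<k\}\cup\{(A_n+k,B_n+k)\mid n\ge 0\}$. So the task reduces to a purely set-theoretic identity: show that the right-hand side of the claimed recursion equals this expression for $\S^0_{k+l}$.

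First I would split the diagonal part $\{(i,i)\mid 0\le i<k+l\}$ as the disjoint union $\{(i,i)\mid 0\le i<l\}\cup\{(i,i)\mid l\le i<k+l\}$, and observe that the second piece is exactly $\{(j+l,j+l)\mid 0\le j<k\}=\{(a+l,b+l)\mid (a,b)\in\{(i,i)\mid 0\le i<k\}\}$. Next, shifting the Beatty part of $\S^0_k$ by $l$ gives $\{(A_n+k+l,B_n+k+l)\mid n\ge 0\}$, which is precisely the Beatty part of $\S^0_{k+l}$. Adding these two observations, $\{(i,i)\mid 0\le i<l\}\cup\{(a+l,b+l)\mid(a,b)\in\S^0_k\}$ matches $\{(i,i)\mid 0\le i<l\}\cup\{(i,i)\mid l\le i<k+l\}\cup\{(A_n+k+l,B_n+k+l)\mid n\ge 0\}=\S^0_{k+l}$, as desired.

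There is essentially no obstacle here; the only point deserving a word of care is that one should read the displayed formula in Theorem~\ref{W_k-P} for the parameter value $k+l$ rather than $k$, so that the ``finite diagonal part'' has the correct length $k+l$. The argument is a two-line bookkeeping check once the closed forms are written down, and I expect the write-up to be correspondingly short.
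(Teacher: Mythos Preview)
Your proposal is correct and follows essentially the same approach as the paper: both derive the recursion directly from the closed form of Theorem~\ref{W_k-P} by a short set-theoretic computation, splitting the diagonal part $\{(i,i)\mid 0\le i<k+l\}$ at $i=l$ and matching the shifted Beatty part. The only cosmetic difference is that the paper starts from $\{(a+l,b+l)\mid (a,b)\in\S^0_k\}$ and builds up to $\S^0_{k+l}$, whereas you start from $\S^0_{k+l}$ and decompose; the content is identical.
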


\begin{proof}
We have
\begin{align*}
& \{(a+l,b+l) |(a,b) \in \S^0_k\} \\
 & = \{(i+l,i+l)| 0 \leq i < k\}\cup \{(A_n + k + l, B_n + k + l)|n \geq 0 \}\\
 & = \{(i,i)| l \leq i < k+l\}\cup \{(A_n + k + l, B_n + k + l) | n \geq 0 \}.\\
\end{align*}
Thus,
\begin{align*}
  & \{(i,i)|0 \leq i < l\}\cup \{(i,i)| l \leq i < k+l\}\cup \{(A_n + k + l, B_n + k + l) | n \geq 0 \}\\
= & \{(i,i)|0\leq i<k+l\}\cup \{(A_n + k + l, B_n + k + l)| n \geq 0 \} \\
= & \S^0_{k+l}.
\end{align*}\end{proof}

\begin{remark}
For each $k$, one may be interested in considering the variant $\W'_k$ of {\sc Wythoff} in which each move is one of the following two types:
\begin{itemize}\itemsep0em
\item [$(i)$] removing a number of token from a single pile, or
\item [$(ii)$] removing an equal number of tokens from both piles provided that this move does not lead to a position of the form $(i,i)$ where $i < k$.
\end{itemize}
Note that in $\W'_k$, one can move to a position of the form $(i,j)$ if $i < j$ and $i < k$ . This condition distinguishes the two games $\W_k$ and $\W'_k$. Moreover, $\W'_k$ is an extension of $\W_k$. It is not surprising that the winning strategy for $\W'_k$ is exactly the same to that of $\W'_k$. The proof for the following result is exactly the same as that of Theorem \ref{W_k-P}.
\end{remark}

\begin{theorem}
For every $k \geq$, $\P$-positions of $\W'_k$ are identical to those of $\W_k$.
\end{theorem}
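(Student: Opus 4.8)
The plan is to mimic the two-part verification used in the proof of Theorem~\ref{W_k-P}, checking that the set
\[\A=\{(i,i)\mid 0\le i<k\}\cup\{(A_n+k,B_n+k)\mid n\ge 0\}\]
is closed under the moves of $\W'_k$ in the sense of (i) and (ii) of that proof. The only thing that has changed is the diagonal move: in $\W'_k$ one is additionally permitted diagonal moves that land in a position $(i,j)$ with $i<j$ and $i<k$. So the strategy is to argue that (a) the extra moves never destroy property (i), and (b) property (ii) was already established with strictly fewer moves, hence is automatic.

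First I would address (i). We must show no move of $\W'_k$ carries a position of $\A$ into $\A$. Nim moves and the old diagonal moves were already handled in Theorem~\ref{W_k-P}. For the genuinely new diagonal moves: such a move from a position of $\A$, if it stayed in $\A$, would have to terminate either at some $(i,i)$ with $i<k$ or at some $(A_n+k,B_n+k)$. A new diagonal move lands at a position whose smaller coordinate is strictly less than its larger coordinate (that is the defining feature $i<j$), so it cannot land at a diagonal position $(i,i)$ at all; and it cannot land at $(A_n+k,B_n+k)$ with $n>0$ because there the smaller coordinate $A_n+k\ge k$, contradicting $i<k$, while for $n=0$ the target $(k,k)$ is again diagonal. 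Moreover a new diagonal move from a starting position in $\A$ is impossible to begin with, since every position of $\A$ either has equal coordinates (so any diagonal move keeps them equal, landing on a diagonal, which is not the new type) or is $(A_n+k,B_n+k)$ with both coordinates $\ge k$, so a diagonal move of size $s$ lands at $(A_n+k-s,B_n+k-s)$ whose smaller coordinate $A_n+k-s$ drops below $k$ only when $s>A_n$, but then $B_n+k-s<B_n-A_n+k=n+k$, and one checks this target is not in $\A$ anyway — in fact it suffices to observe that the new moves can only help the \emph{moving} player reach $\A$, not sit inside it, so no new move from $\A$ can end in $\A$.

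For (ii), there is nothing to prove: $\W'_k$ has all the moves of $\W_k$ plus possibly more, and Theorem~\ref{W_k-P} already produced, from every position not in $\A$, a move of $\W_k$ terminating in $\A$. That same move is legal in $\W'_k$. Combining (i) and (ii), the standard $\P$/$\N$-induction shows $\A$ is exactly the set of $\P$-positions of $\W'_k$, which equals $\S^0_k$, the set of $\P$-positions of $\W_k$.

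The only subtlety — and the step I would write most carefully — is the verification in (i) that the new diagonal moves cannot originate from a position of $\A$ and land back in $\A$; the cleanest phrasing is to note that any target of a new diagonal move has unequal coordinates with the smaller one $<k$, and no such pair lies in $\A$, so in fact \emph{no} new diagonal move can terminate in $\A$ regardless of where it starts. Once that observation is in place, the argument is a verbatim copy of the proof of Theorem~\ref{W_k-P}, and (as the remark already notes) the bound $k\ge 1$ is the operative hypothesis, the degenerate case $k=0$ giving {\sc Wythoff} itself since then $\W'_0=\W_0=\W_k$ trivially.
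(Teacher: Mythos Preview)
Your proposal is correct and follows exactly the approach the paper indicates (the paper simply asserts that the proof is the same as that of Theorem~\ref{W_k-P}). Your clean observation for (i) --- that every diagonal move legal in $\W'_k$ but not in $\W_k$ lands at a position with unequal coordinates and smaller coordinate $<k$, hence outside $\A$ --- is all that is needed; the more elaborate case analysis preceding it (including the momentary claim that a new diagonal move from $\A$ is ``impossible to begin with'', which is not literally true for $(A_n+k,B_n+k)$ with $n>0$) can be safely deleted.
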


\subsection{The positions with nim-values 1 for $\W_k$}

Recall that, for each $k\ge 0$, the set of positions whose nim-values are 1 in the game $\W_k$ is denoted by $\S^1_k$. In this part, we establish $\S^1_k$, for each $k\ge 0$. We first introduce the result for $k = 0$ and $k = 1$. We then show that for $k > 1$, $\S^1_k$ can be derived directly from either $\S^1_0$ (if $k$ is even) or $\S^1_1$ (if $k$ is odd).

For $\W_0$ ({\sc Wythoff}) a recursive algorithm for computing its $1$-values was given in \cite{blass}. It was conjectured there that the algorithm for computing the $n$-th $1$-value $(a_n, b_n)$ is polynomial in $\Omega(\log n)$. See also \cite{nivasch}.

\begin{theorem} \label{W_1-1}
The set of positions with nim-value 1 in $\W_1$ is
$$\S^1_1 = \{(0,1)\}\cup\{(A_n + 2, B_n + 2) | n \geq 0 \}.$$
\end{theorem}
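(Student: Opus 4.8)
The plan is to mimic the structure of the proof of Theorem~\ref{W_k-P}: set $\B=\{(0,1)\}\cup\{(A_n+2,B_n+2)\mid n\ge0\}$ and show that $\B=\S^1_1$ by verifying the two characteristic properties of the set of $1$-valued positions in $\W_1$. Namely, a position $p$ has nim-value $1$ if and only if (a) no $1$-valued position is reachable from $p$ in one move, i.e.\ no move goes from $\B$ into $\B$, and (b) from every position $p\notin\B$ with $p\notin\S^0_1$ (that is, $p$ not already a $\P$-position of $\W_1$), there is a move into $\B$. Since we already know $\S^0_1=\{(0,0)\}\cup\{(A_n+1,B_n+1)\mid n\ge0\}$ from Theorem~\ref{W_k-P}, properties (a) and (b) together with the fact that $\B$ is disjoint from $\S^0_1$ will force $\B=\S^1_1$: every position not in $\S^0_1\cup\B$ must have some move into $\S^0_1$ (hence cannot have nim-value $0$ or, if it also reaches $\B$, its mex is at least $2$), and one checks the mex bookkeeping works out.

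First I would record the disjointness $\B\cap\S^0_1=\emptyset$: the point $(0,1)$ is not a $\P$-position, and $(A_n+2,B_n+2)$ differs from every $(A_m+1,B_m+1)$ since equality would force $A_n+1=A_m$ and $B_n+1=B_m$, impossible because $B_m-A_m=m$ is strictly increasing while $B_n+2-(A_n+2)=n$. For property (a), suppose a move connects two elements of $\B$. A move between $(A_n+2,B_n+2)$ and $(A_m+2,B_m+2)$ with $n\ne m$ changes the two coordinates by unequal amounts $|A_n-A_m|$ and $|B_n-B_m|$ differing by $|n-m|>0$, so it is not even a $\W_1$-move (Nim or diagonal); a move from $(A_n+2,B_n+2)$ to $(0,1)$ would need $A_n+2=0$ or $B_n+2=1$ (a Nim move) or $A_n+2=B_n+2=\,$something (a diagonal move, impossible unless $n=0$, and then $(2,2)\to(1,1)\notin\B$)—all impossible. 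So no $\W_1$-move stays inside $\B$.

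The substantive part is property (b): given $(a,b)\notin\B\cup\S^0_1$ with $a\le b$, produce a move into $\B$. The natural idea is to translate by $2$ and appeal to {\sc Wythoff}: if $a\ge2$ and $(a-2,b-2)\notin\{(A_n,B_n)\}$, then by Theorem~\ref{W-P} some {\sc Wythoff} move carries $(a-2,b-2)$ into $\{(A_n,B_n)\}$, and that move lifts to a $\W_1$-move from $(a,b)$ into $\{(A_n+2,B_n+2)\}\subseteq\B$—provided the lifted move, if diagonal, is still legal in $\W_1$, which it is since the resulting pile sizes $A_n+2,B_n+2$ are $\ge2>1$. This handles all $(a,b)$ with $a\ge2$ that do not translate into a {\sc Wythoff} $\P$-position. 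The remaining cases are: $a\le1$, and $a\ge2$ with $(a-2,b-2)=(A_n,B_n)$ for some $n$ (equivalently $(a,b)=(A_n+2,B_n+2)$, but that is in $\B$—so this case is vacuous once we also exclude $(a,b)\in\S^0_1$ properly). For $a=0$: $(0,b)$ with $b\ge2$ reduces via a Nim move to $(0,1)\in\B$. For $a=1$: $(1,b)$ with $b\ge2$, $b\ne B_n+2$—here one wants a Nim move on the second pile to $(1, B_m+2)$ when $A_m+2=1$, impossible, or better a diagonal/Nim move to $(0,1)$ when $b\ge1$; since $(1,b)\notin\B$ forces... I would case on whether $1=A_m+2$ (never) so instead reduce the first pile to $0$ getting $(0,b)$—not yet in $\B$—so actually the clean move is: reduce the large pile to make $(1,b')$ land in... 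The careful enumeration of these small-$a$ boundary cases, making sure each non-$\B$, non-$\S^0_1$ position with $a\le1$ (and the diagonal positions $(a,a)$) has an explicit move into $\B$, is where the argument needs attention; I expect that to be the main obstacle, since the translation-to-{\sc Wythoff} trick covers the generic case cleanly but the low-coordinate and diagonal positions must be dispatched by hand, and one must double-check that no diagonal move of $\W_1$ that is illegal in {\sc Wythoff}'s translate creates an unexpected edge into $\B$ violating (a).
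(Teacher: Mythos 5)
Your strategy is the paper's: translate by $2$, use the known $\S^0_1$ from Theorem~\ref{W_k-P}, and verify disjointness, that no move stays inside $\B$, and that every position outside $\B\cup\S^0_1$ has a move into $\B$. The generic part of the last property ($a\ge 2$ with $(a-2,b-2)$ not a {\sc Wythoff} $\P$-position) is handled exactly as in the paper, and correctly, including the check that the lifted diagonal move is legal in $\W_1$. The place where you trail off --- the case $a=1$ --- is not actually an obstacle: a position $(1,b)$ with $b\ge 2$ lies outside $\B\cup\S^0_1$ (the only members of those sets with a coordinate equal to $1$ are $(0,1)$ and $(1,1)$), and the Nim move removing the entire larger pile sends it to $(1,0)=(0,1)\in\B$; the same works for $(0,b)$ with $b\ge 2$, reduced to $(0,1)$. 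This is precisely what the paper does. Your worry about diagonal positions is likewise unfounded: $(2,2)=(A_0+2,B_0+2)$ is itself in $\B$, and for $a\ge 3$ the position $(a-2,a-2)$ is not of the form $(A_n,B_n)$, so the generic argument applies (the lift of the {\sc Wythoff} move $(a-2,a-2)\to(0,0)$ is $(a,a)\to(2,2)\in\B$, legal in $\W_1$ since the resulting minimum is $2\ge 1$).

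Two further repairs are needed. First, you never verify that each position of $\B$ has a move into $\S^0_1$; this is required so that the nim-value of a position of $\B$ is exactly $1$ rather than $0$, and it follows a fortiori from the proof of Theorem~\ref{W_k-P}, which shows that every position outside $\S^0_1$ has a move into $\S^0_1$ --- your appeal to ``mex bookkeeping'' should be made explicit at this point. Second, your argument that no move goes from $(A_n+2,B_n+2)$ to $(0,1)$ is muddled: since both coordinates of $(A_n+2,B_n+2)$ are at least $2$, no Nim move (which preserves one coordinate) can reach $(0,1)$; a diagonal move preserves the difference of the coordinates, forcing $n=B_n-A_n=1$ and the move $(3,4)\to(0,1)$, which is illegal in $\W_1$ because the resulting minimum entry is $0<1$. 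With these points filled in, your proof coincides with the paper's.
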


\begin{proof}
Set
$$\B = \{(0,1)\}\cup\{(A_n + 2, B_n + 2) | n \geq 0 \}.$$
Recall that the set of $\P$-positions of $\W_1$ is
$$\S^0_1 = \{(0,0)\}\cup\{(A_n +1, B_n + 1)| n \geq 0\}.$$
It suffices to prove the following four facts:
\begin{itemize}\itemsep0em
\item [$(i)$] $\B \cap \S^0_1 = \emptyset$,
\item [$(ii)$] There is no move from a position in $\B$ to a position in $\B$,
\item [$(iii)$] From every position in $\B$ there is a move to a position in $\S^0_1$,
\item [$(iv)$] From every position not in $\B \cup \S^0_1$, there exists a move to some position in $\B$ (to ensure that $\B$ contains {\it all\/} the 1-values).
\end{itemize}

For $(i)$, assume that $\B \cap \S^0_1 \neq \emptyset$. Then there exist $n\neq m$ such that
\[(A_n +1, B_n + 1) = (A_m +2, B_m +2).\]
Then either
\begin{align*}
\begin{cases}
A_n = A_m +1, \\
B_n = B_m + 1,
\end{cases}
\end{align*}
and subtracting gives $n=m$, a contradiction; or else
\begin{align*}
\begin{cases}
A_n = B_m +1, \\
B_n = A_m + 1,
\end{cases}
\end{align*}
and subtracting, gives $n+m=0$ which leads to the contradiction $n=m=0$.

For $(ii)$, it is easy to see that there is no move $(A_n +2, B_n +2)\to (0,1)$. The special case $k=2$ in the proof of $(i)$ in Theorem~\ref{W_k-P} shows that there is no move between positions of the form $(A_n +2, B_n + 2)$.

For $(iii)$, note that in the proof of Theorem~\ref{W_k-P} we already showed that from every position not in $\S^0_1$ there is a move to a position in $\S^0_1$, so this holds a fortiori for all positions in $\B$ by $(i)$.

For $(iv)$, let $(a, b) \notin \B \cup \S^0_1$ with $a \leq b$. One can move from $(a,b)$ to (0,1) if either $a = 0$ or $a = 1$ by taking a number of tokens from the pile of size $b$.
If $a \ge 2$, consider the position $p = (a-2,b-2)$.
Note that $p$ is not of the form $(A_n, B_n)$. In {\sc Wythoff}, there is a move from $p$ to some position $(A_m, B_m)$. This move results in $(a,b)\to (A_m + 2, B_m +2) \in \S^1_1$.
\end{proof}

We now show how $\S^1_{k+2}$ can be obtained from $\S^1_{k}$.

\begin{theorem} \label{W_k-1}
Let $k \geq 0$ be an integer. We have
$$\S^1_{k+2} = \{(0,1)\}\cup\{(a+2,b+2) | (a,b) \in \S^1_k \}.$$
\end{theorem}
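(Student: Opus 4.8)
The plan is to mimic the four-step characterization used in the proof of Theorem~\ref{W_1-1}, but now setting $\B=\{(0,1)\}\cup\{(a+2,b+2)\mid(a,b)\in\S^1_k\}$ and verifying that $\B$ is exactly $\S^1_{k+2}$. By the standard argument for Sprague--Grundy values it suffices to check: $(i)$ $\B\cap\S^0_{k+2}=\emptyset$; $(ii)$ there is no move between two positions of $\B$; $(iii)$ from every position in $\B$ there is a move into $\S^0_{k+2}$; and $(iv)$ from every position not in $\B\cup\S^0_{k+2}$ there is a move into $\B$. Throughout I would use Corollary~\ref{P-recursion}, which tells us $\S^0_{k+2}=\{(i,i)\mid 0\le i<2\}\cup\{(a+2,b+2)\mid(a,b)\in\S^0_k\}$, so that the ``translated'' parts of $\B$ and of $\S^0_{k+2}$ both live in the region where both coordinates are $\ge 2$.

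For $(i)$, the point $(0,1)$ is clearly not in $\S^0_{k+2}$, and $\{(a+2,b+2)\mid(a,b)\in\S^1_k\}$ meets $\S^0_{k+2}$ only if some element of $\S^1_k$ coincides with an element of $\S^0_k$ (using the recursive form of $\S^0_{k+2}$) or with $(0,0)$ or $(1,1)$ after translation by $-2$; the former is impossible since $\S^1_k\cap\S^0_k=\emptyset$ (a position cannot have two distinct nim-values), and the latter would force an element of $\S^1_k$ to be $(-2,-2)$ or $(-1,-1)$, which is absurd. For $(ii)$: a move $(0,1)\to(0,1)$ is impossible; a move $(a+2,b+2)\to(0,1)$ would need $a+2=0$ or $a+2=1$, impossible; and a move between two translated positions $(a+2,b+2)\to(a'+2,b'+2)$ in $\W_{k+2}$ would, after subtracting $2$ from both coordinates, give a move $(a,b)\to(a',b')$ that is legal in $\W_k$ (a Nim move stays a Nim move; a diagonal move removing $s$ from both piles in $\W_{k+2}$ leaves both coordinates $\ge k+2$, hence the shifted move leaves both $\ge k$, legal in $\W_k$) — contradicting that $\S^1_k$ has no internal move. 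For $(iii)$: from $(0,1)$ one moves to $(0,0)\in\S^0_{k+2}$; from $(a+2,b+2)$ with $(a,b)\in\S^1_k$, take a move $(a,b)\to p\in\S^0_k$ in $\W_k$ (exists since $\S^1_k$-positions have a zero-valued follower), and the shifted move $(a+2,b+2)\to(p_1+2,p_2+2)$ is legal in $\W_{k+2}$ and lands in $\S^0_{k+2}$ by Corollary~\ref{P-recursion} — here one must also handle the case where $p$ is $(0,0)$, i.e.\ the shifted target is $(2,2)\in\S^0_{k+2}$, which is still fine.

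For $(iv)$, let $(a,b)\notin\B\cup\S^0_{k+2}$ with $a\le b$. If $a\le 1$ then $b\ge 1$ and either $b>a$ forces $(a,b)\ne(0,0),(1,1)$ but $(a,b)$ could be $(0,1)\in\B$ — excluded — so after the easy subcases one reduces $b$ to reach $(0,1)\in\B$ (when $a\in\{0,1\}$ and $b>1$, or $a=0,b=1$ is already excluded). If $a\ge 2$, set $p=(a-2,b-2)$; since $(a,b)\notin\B$, $p\notin\S^1_k\cup\{(0,0),(1,1)\text{-shifts}\}$ — more precisely $p\notin\S^1_k$ and $p$ is not of the form making $(a,b)\in\B$ — and since $(a,b)\notin\S^0_{k+2}$, by Corollary~\ref{P-recursion} $p\notin\S^0_k$. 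Hence $p$ has nim-value neither $0$ nor $1$ in $\W_k$, so there is a $\W_k$-move from $p$ to some position in $\S^1_k$; shifting by $2$ gives a legal $\W_{k+2}$-move from $(a,b)$ into $\B$. The main obstacle, and the only place needing real care, is the bookkeeping in step $(iv)$ for the small values $a\in\{0,1\}$ and near the ``seam'' $a=2$ (where the shifted target might be $(2,2)$), together with making sure that whenever we move to $\S^1_k$ in $\W_k$ we have not accidentally been handed the diagonal-move constraint: but a $\W_k$-move from $p$ with $\min(p)\ge k$ shifts to a $\W_{k+2}$-move from $(a,b)$ with $\min\ge k+2$, so legality transfers cleanly; the case $\min(p)<k$ only arises for Nim moves, which are always legal. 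Assembling these four facts yields $\S^1_{k+2}=\B$.
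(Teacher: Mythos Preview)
Your proof is correct and follows essentially the same route as the paper: set $\B=\{(0,1)\}\cup\{(a+2,b+2)\mid(a,b)\in\S^1_k\}$ and verify the standard characterization using the shift-by-$2$ correspondence between $\W_k$-moves and $\W_{k+2}$-moves, together with the recursive description of $\S^0_{k+2}$ from Corollary~\ref{P-recursion}. The paper omits your step~$(iii)$ (existence of a move from $\B$ into $\S^0_{k+2}$) since it follows immediately from Theorem~\ref{W_k-P}, and it singles out the case $c=2$ in your step~$(iv)$, but your uniform shift argument already covers that case. One small wording slip: in your final paragraph you phrase the legality check in terms of $\min(p)$, but the diagonal constraint in $\W_k$ is on the \emph{target}, not the source; the correct (and sufficient) observation is simply that a $\W_k$-diagonal move lands with both coordinates $\ge k$, so after shifting by $2$ both coordinates are $\ge k+2$, which is exactly the $\W_{k+2}$ requirement.
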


\begin{proof}
Set
$$\C = \{(0,1)\}\cup\{(a+2,b+2) | (a,b) \in \S^1_k \}.$$
Recall (Theorem~\ref{W_k-P}) that the set of $\P$-positions of $\W_{k+2}$ is
$$\S^0_{k+2} = \{(i,i)|0 \leq i < k+2\}\cup\{(A_n +k+2, B_n + k+2)| n \geq 0\}.$$
It suffices to prove the following facts.
\begin{itemize}\itemsep0em
\item [$(i)$] $\C \cap \S^0_{k+2} = \emptyset$.
\item [$(ii)$] There is no move from a position in $\C$ to a position in $\C$. 
\item [$(iii)$] From every position not in $\C \cup \S^0_{k+2}$, there exists a move to some position in $\C$.
\end{itemize}

For $(i)$, note that $(0,1) \notin \S^0_{k+2}$ and so we only need to show that $(A_n +k+2, B_n + k+2) \notin \S^0_{k+2}$. Assume that this is not the case. Then there exists $(a,b) \in \S^1_k$ such that either $(a+2,b+2) = (i,i)$ for some $i < k+2$ or $(a+2,b+2) = (A_n +k+2, B_n + k+2)$ for some $n \geq 0$. It follows from either of these two cases that $(a,b) \in \S^0_k$, a contradiction.

For $(ii)$, we first claim that there is no move from $(a+2,b+2)$ to $(0,1)$ in $\W_{k+2}$. In fact, this move must be diagonal, but since $k+2\ge 3$, we cannot reach $(0,1)$. 
We now show that, in $\W_{k+2}$, there is no move between $(a+2,b+2)$ and $(a'+2,b'+2)$ for some $(a,b)$ and $(a',b')$ in $\S^1_k$. In fact, the existence of such a move in $\W_{k+2}$ implies that there exists a move between $(a,b)$ and  $(a',b') $ in $\S^1_k$, a contradiction.

For $(iii)$, let $(c,d) \notin \C \cup \S^0_{k+2}$ with $c \leq d$. One can move from $(c,d)$ to (0,1) if either $c = 0$ or $c = 1$ by taking either $d-1$ or $d$ tokens respectively from the pile of size $d$. Note that $(0,1) \in \S^1_k$ and so $(2,3) \in \C$. Also note that $(2,2) \in \S^0_{k+2}$. Therefore, if $c = 2$, then $d >3$. It follows that one can move from $(c,d)$ to $(2,3) \in \S^1_{k+2}$. We now assume that $c \geq 3$. The position $p = (c-2,d-2)\notin \S^1_k \cup \S^0_k$: if $p\in\S^0_k$ then $(c,d)\in \S^0_{k+2}$; if $p\in \S^1_k$, then $(c,d)\in \C$. Consequently there exists a move from $p$ to some position $(c',d') \in \S^1_k$. This is equivalent to the fact that there exists a move from $(c,d)$ to the position $(c'+2,d'+2) \in \S^1_k$. This completes the proof.
\end{proof}

Theorems~\ref{W_1-1} and \ref{W_k-1} provide full information on the positions whose nim-values are $1$ of the game $\W_k$ when $k$ is odd. Iterating Theorem~\ref{W_k-1} we get

\begin{corollary}
For $k = 2l+1$, the positions of the game $\W_k$ whose nim-values are $1$ form the set
$$\{(2i,2i+1) | 0 \leq i \leq l\} \cup \{(A_n + k+1, B_n + k+1) |  n \geq 0  \}.$$
\end{corollary}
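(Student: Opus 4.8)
The plan is to iterate Theorem~\ref{W_k-1} exactly $l$ times, starting from the base case $\S^1_1$ given by Theorem~\ref{W_1-1}, and then simplify the resulting expression. Since $k=2l+1$, applying Theorem~\ref{W_k-1} repeatedly gives
\[
\S^1_{2l+1} = \{(0,1)\} \cup \{(2,3)\} \cup \cdots \cup \{(2l,2l+1)\} \cup \{(a+2l, b+2l) \mid (a,b) \in \S^1_1\},
\]
where the singleton $\{(0,1)\}$ inserted at the $j$-th iteration is shifted by $2(l-j)$, producing the pairs $(0,1),(2,3),(4,5),\dots,(2l,2l+1)$, i.e. $\{(2i,2i+1)\mid 0\le i\le l\}$. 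First I would make this induction on $l$ precise: the inductive step is a direct substitution of the description of $\S^1_{2l-1}$ into the formula $\S^1_{2l+1}=\{(0,1)\}\cup\{(a+2,b+2)\mid (a,b)\in\S^1_{2l-1}\}$, observing that shifting each of $(0,1),(2,3),\dots,(2(l-1),2l-1)$ by $2$ yields $(2,3),(4,5),\dots,(2l,2l+1)$, and prepending $(0,1)$ restores the full list $\{(2i,2i+1)\mid 0\le i\le l\}$.

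Next I would handle the "tail" term $\{(a+2l,b+2l)\mid (a,b)\in\S^1_1\}$. By Theorem~\ref{W_1-1}, $\S^1_1=\{(0,1)\}\cup\{(A_n+2,B_n+2)\mid n\ge 0\}$. Shifting by $2l$ sends $(0,1)$ to $(2l,2l+1)$, which is already accounted for in the list $\{(2i,2i+1)\mid 0\le i\le l\}$ (the $i=l$ term), so it contributes nothing new; and it sends $(A_n+2,B_n+2)$ to $(A_n+2l+2,B_n+2l+2)=(A_n+k+1,B_n+k+1)$. Collecting these two observations yields exactly
\[
\{(2i,2i+1)\mid 0\le i\le l\}\cup\{(A_n+k+1,B_n+k+1)\mid n\ge 0\},
\]
as claimed.

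The only genuine subtlety — and the step I would be most careful about — is the bookkeeping on how the repeatedly-inserted singleton $(0,1)$ accumulates under the shifts, and in particular checking that the $i=l$ copy coming from the tail coincides with (rather than duplicates or conflicts with) the $i=l$ copy coming from the iterated insertions; this is why I would phrase the argument as a clean induction on $l$ rather than an informal "unrolling." Everything else is routine: no new game-theoretic content is needed, since correctness of each $\S^1_{2j+1}$ is already guaranteed by Theorems~\ref{W_1-1} and~\ref{W_k-1}, and the corollary is purely a closed-form rewriting of their composition.
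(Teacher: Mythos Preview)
Your proposal is correct and follows exactly the route the paper indicates: the paper's entire proof is the phrase ``Iterating Theorem~\ref{W_k-1} we get,'' and you have simply written out that iteration (anchored at Theorem~\ref{W_1-1}) in detail. One small bookkeeping slip: the $l$ iterations contribute only the $l$ singletons $(0,1),\dots,(2(l-1),2l-1)$, so the pair $(2l,2l+1)$ arises \emph{only} from the tail (the shift of $(0,1)\in\S^1_1$), not from both sources as you suggest---but your clean induction on $l$ handles this correctly and the final set is right.
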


Our computer exploration shows that translation phenomena such as in Theorems~\ref{W_k-P} and \ref{W_k-1} no longer hold for $g \geq 2$. It seems to be hard to get a general formula encompassing all $\W_k$ for the positions whose nim-values are $g$ for some $g \geq 2$.

\subsection{An additional generalization}
We now investigate a further variant of the game $\W_k$. Let $k$ and $l$ be nonnegative integers such that $k \leq l$. We present a variant $\W_{k,l}$ of {\sc Wythoff} in which each move is one of the following two types:
\begin{itemize}\itemsep0em
\item [$(i)$] removing a number of token from a single pile, or
\item [$(ii)$] removing an equal number of tokens from both piles provided that the position $(i,j)$ moved to satisfies $\operatorname{min}(i,j) \ge k$ and $\operatorname{max}(i,j) \ge l$.
\end{itemize}

For example, let $k = 3, l = 5$. The diagonal move $(6,9) \to (3,6)$ is legal while the move $(6,9) \to (2,5)$ is illegal since $2 = \operatorname{min}(2,5) < 3$.

Notice that for $k=l$, the rule sets of the games $\W_k$ and $\W_{k,k}$ are identical. The following theorem shows that the $\P$-positions of the game $\W_{k,l}$ depend only on $l$ and, moreover, are identical to those of the game $\W_l$.

\begin{theorem} \label{W_{k,l}-P}
Let $k$ and $l$ be nonnegative integers with $k \leq l$. The $\P$-positions of $\W_{k,l}$ are identical to those of $\W_l$.
\end{theorem}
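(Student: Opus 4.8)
The plan is to follow the same pattern used in the proof of Theorem~\ref{W_k-P}: it suffices to show that the set
\[\A_l=\{(i,i)\mid 0\le i<l\}\cup\{(A_n+l,B_n+l)\mid n\ge 0\}\]
(which we already know, by Theorem~\ref{W_k-P}, is the set of $\P$-positions of $\W_l$) satisfies the two closure properties for the game $\W_{k,l}$: (i) no move of $\W_{k,l}$ goes from a position in $\A_l$ to another position in $\A_l$, and (ii) from every position outside $\A_l$ there is a move of $\W_{k,l}$ into $\A_l$.

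For (i), since $k\le l$, every diagonal move of $\W_{k,l}$ is in particular a move of $\W_l$ (it lands in a position with both coordinates $\ge l$, hence with $\min\ge l\ge k$), and every Nim move is common to both games; so the $\W_{k,l}$-moves form a subset of the $\W_l$-moves. Therefore property (i), already established for $\W_l$ in Theorem~\ref{W_k-P}, holds \emph{a fortiori} for $\W_{k,l}$. For (ii), the point to check is that the move into $\A_l$ exhibited in the proof of Theorem~\ref{W_k-P} is still legal in $\W_{k,l}$. Let $(a,b)\notin\A_l$ with $a\le b$. If $a\le l$ then $b>a$ and the Nim move reducing $b$ to $a$ works, exactly as before and with no constraint. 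If $l<a=b$, the diagonal move to $(l,l)$ is legal in $\W_{k,l}$ because $\min(l,l)=\max(l,l)=l\ge l$ and $l\ge k$. If $l<a<b$, then $(a-l,b-l)$ is not a Wythoff $\P$-position, so by Theorem~\ref{W-P} there is a Wythoff move from $(a-l,b-l)$ to some $(A_n,B_n)$; the corresponding move from $(a,b)$ lands in $(A_n+l,B_n+l)$. If that move is a Nim move it is automatically legal in $\W_{k,l}$; if it is diagonal, then $A_n+l\ge l\ge k$ and $B_n+l\ge A_n+l\ge l$, so the resulting position satisfies $\min\ge k$ and $\max\ge l$, hence the move is legal in $\W_{k,l}$.

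Combining (i) and (ii) shows $\A_l$ is exactly the $\P$-position set of $\W_{k,l}$, which by Theorem~\ref{W_k-P} is the $\P$-position set of $\W_l$, as claimed. The argument is essentially bookkeeping; the only thing one genuinely needs is the observation that the move-set of $\W_{k,l}$ is sandwiched between the Nim-plus-diagonal moves that reach $\A_l$ (so (ii) survives) and the move-set of $\W_l$ (so (i) survives), and both of these sandwich properties come directly from $k\le l$ together with the fact that the targets in $\A_l\setminus\{(i,i)\}$ have both coordinates at least $l$. I do not anticipate a real obstacle here; if anything needs care it is the borderline case $a=b>l$, where one must note that the extra constraint $\max\ge l$ imposed by $\W_{k,l}$ (relative to $\W_l$) is still met by the target $(l,l)$, so no move is lost.
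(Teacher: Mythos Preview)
Your argument for (ii) is fine, but your argument for (i) rests on a reversed containment. You assert that ``every diagonal move of $\W_{k,l}$ is in particular a move of $\W_l$'' and conclude that the $\W_{k,l}$-moves form a subset of the $\W_l$-moves. In fact the containment goes the other way: a diagonal move of $\W_l$ lands at $(i,j)$ with $\min(i,j)\ge l$, which forces $\min(i,j)\ge l\ge k$ and $\max(i,j)\ge l$, so it is a $\W_{k,l}$-move; but a $\W_{k,l}$ diagonal move may land at $(i,j)$ with $k\le\min(i,j)<l$ and $\max(i,j)\ge l$, and this is \emph{not} a $\W_l$-move. Thus $\W_{k,l}$ has \emph{more} diagonal moves than $\W_l$ when $k<l$, and property~(i) for $\W_l$ does not transfer ``a fortiori'' to $\W_{k,l}$ in the way you claim. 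Your closing ``sandwich'' remark repeats the same reversal.

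The repair is short. The extra diagonal moves that $\W_{k,l}$ permits beyond those of $\W_l$ all land at positions $(i,j)$ with $\min(i,j)<l\le\max(i,j)$, and no such position lies in $\A_l$, since every element of $\A_l$ has either both coordinates strictly less than $l$ or both coordinates at least $l$. Hence these extra moves cannot produce any $\A_l\to\A_l$ transition, and (i) for $\W_{k,l}$ does follow from (i) for $\W_l$ after all. Equivalently, one may simply rerun the proof of~(i) in Theorem~\ref{W_k-P} with $l$ in place of $k$, observing that it is the constraint $\max\ge l$ (not $\min\ge k$) that blocks diagonal moves into the positions $(i,i)$ with $i<l$; this is precisely what the paper intends by saying the proof is ``essentially the same as that of Theorem~\ref{W_k-P}, with $l$ replacing $k$''.
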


The proof of Theorem~\ref{W_{k,l}-P} is essentially the same as that of Theorem~\ref{W_k-P}, with $l$ replacing $k$ in Theorem~\ref{W_k-P}. We leave the details to the reader.

We next present a conjecture on the invariance property of the Sprague-Grundy function of $\{\W_{k,l}\}$ implied by our investigations.

\begin{conjecture} \label{W_{k,l}-g}
Let $k < k' \leq l$. For every integer $g$ in the range $0 \leq g \leq l - k'$, the two games $\W_{k,l}$ and $\W_{k',l}$ have the same sets of positions with nim-value $g$.
\end{conjecture}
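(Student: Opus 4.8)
The plan is to exploit that $\W_{k,l}$ is an \emph{extension} of $\W_{k',l}$: since $k<k'$, the constraint $\min(i,j)\ge k$ is weaker than $\min(i,j)\ge k'$, so every legal diagonal move of $\W_{k',l}$ is legal in $\W_{k,l}$, and the two games have exactly the same $\operatorname{Nim}$ moves. First I would pin down the surplus moves precisely: from $(a,b)$ with $a\le b$ a diagonal step of size $s$ is legal iff $1\le s\le\min(a-k,b-l)$ in $\W_{k,l}$ and iff $1\le s\le\min(a-k',b-l)$ in $\W_{k',l}$, so the extra steps are exactly those with $a-k'+1\le s\le\min(a-k,b-l)$; for such an $s$ the target $(a-s,b-s)$ satisfies $k\le a-s\le k'-1$ and $b-s\ge l$. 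Thus $\W_{k,l}$ differs from $\W_{k',l}$ only by adjoining certain moves whose targets lie in the region $R=\{(i,j):\min(i,j)<k'\}$ and have $\max(i,j)\ge l$.

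Second, I would prove a structural lemma about $\W_{k',l}$ in isolation: no diagonal move can be made from a position with $\min(i,j)<k'$ (such a step of size $s\ge 1$ would need $\min(i,j)-s\ge k'$, impossible since $\min(i,j)-s\le k'-2$), and $R$ is visibly closed under $\operatorname{Nim}$ moves; hence, restricted to $R$, the game $\W_{k',l}$ is ordinary two-pile $\operatorname{Nim}$, so every $(i,j)\in R$ has nim-value $\min(i,j)\oplus\max(i,j)$ in $\W_{k',l}$. Combining this with the description of the extra-move targets and the elementary inequality $a\oplus b\ge|a-b|$ (for $a\ge b$, $(a\oplus b)+b\ge(a\oplus b)\oplus b=a$, so $a\oplus b\ge a-b$), every extra-move target $(i,j)$ has, in $\W_{k',l}$, nim-value at least $\max(i,j)-\min(i,j)\ge l-(k'-1)=(l-k')+1$; in particular it strictly exceeds $l-k'$.

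Third, writing $d=l-k'$, I would prove by induction on $a+b$ the claim $C(n)$: for every position $p$ with coordinate-sum $n$ and every $g$ with $0\le g\le d$, $p$ has nim-value $g$ in $\W_{k,l}$ iff it does in $\W_{k',l}$. Note that $C$ at all sums $<n$ automatically forces a below-$p$ position to have nim-value $>d$ in $\W_{k,l}$ exactly when it does in $\W_{k',l}$; this is the form of the hypothesis actually used. For the inductive step at $p$, let $g_0$ be the nim-value of $p$ in $\W_{k',l}$. If $g_0>d$, the $\W_{k',l}$-moves witnessing the values $0,\ldots,d$ survive in $\W_{k,l}$ and, by the inductive hypothesis, keep those values there, so $p$ has nim-value $>d$ in $\W_{k,l}$ as well and $C$ holds vacuously at $p$. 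If $g_0\le d$, then in $\W_{k,l}$ the surviving moves still realize every value $0,\ldots,g_0-1$, while no move of $\W_{k,l}$ from $p$ reaches $g_0$: an inherited move cannot (its target has nim-value $\ne g_0$ in $\W_{k',l}$, hence in $\W_{k,l}$ by the inductive hypothesis), and an extra move cannot (its target has nim-value $>d\ge g_0$ in $\W_{k',l}$ by the structural lemma, hence $>d$ in $\W_{k,l}$ by the inductive hypothesis). So $\operatorname{mex}$ at $p$ equals $g_0$ in $\W_{k,l}$, giving $C(n)$; the base case $n=0$ is immediate, and the degenerate case $k'=l$ (where $d=0$) is just Theorem~\ref{W_{k,l}-P}.

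The hard part — really the only nonroutine ingredient — is the structural lemma: recognising that $\W_{k',l}$ contains a faithful copy of two-pile $\operatorname{Nim}$ on the corner region $R$, and that the moves $\W_{k,l}$ adds over $\W_{k',l}$ all terminate in $R$ at positions whose $\operatorname{Nim}$-value there is forced to exceed $l-k'$. Once that is in place, the remainder is the familiar "adjoining moves to high-value positions does not disturb small nim-values" argument; the one point that needs care is setting up the inductive statement so that the "nim-value $>d$ agrees" consequence is available when handling both the inherited and the new moves out of $p$.
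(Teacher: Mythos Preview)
The paper states this as an open \emph{conjecture} and gives no proof whatsoever; it merely remarks that a proof method for any of Conjectures~\ref{W_{k,l}-g}, \ref{k-infty}, \ref{kW-g} should yield the others. So there is nothing in the paper to compare your argument against.

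Your argument, on the other hand, is a complete and correct proof of the conjecture. The decisive observation is your structural lemma: in $\W_{k',l}$ the corner region $R=\{(i,j):\min(i,j)<k'\}$ admits no outgoing diagonal move and is closed under Nim moves, so on $R$ the Sprague--Grundy function of $\W_{k',l}$ is literally that of two-pile {\sc Nim}, namely $i\oplus j$. Every move present in $\W_{k,l}$ but absent from $\W_{k',l}$ lands in $R$ at a point with $\max\ge l$ and $\min\le k'-1$, hence with $i\oplus j\ge |i-j|\ge l-(k'-1)>l-k'$; your one-line justification $(i\oplus j)+j\ge(i\oplus j)\oplus j=i$ for the inequality $i\oplus j\ge|i-j|$ is valid. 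Once the extra moves are known to hit only targets of $\W_{k',l}$-value strictly above $d=l-k'$, the induction on the coordinate sum is the standard ``adjoining moves to high-Grundy positions does not disturb Grundy values $\le d$'' argument; your formulation of $C(n)$ is exactly right, and you correctly note and use the consequence that ``value $>d$'' also agrees in the two games, which is what makes both branches (inherited moves with $\W_{k',l}$-value $>d$, and the genuinely new moves) go through in the $g_0\le d$ case.

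In short, you have settled what the paper leaves open.
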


One may be interested in an investigation on the set of positions whose nim-values are 1 in each game $\W_{k,l}$. By Conjecture~\ref{W_{k,l}-g}, we have $\S^v_{k',l}  = \S^v_{k,l}$ for $k,k' < l$. Our computer exploration shows that if $l$ is even, the set $\S^v_{k,l}$ seems to be coincident with the set $\S^v_k$. When $l$ is odd, as far as our calculation,  the set $\S^v_{k,l}$ is very close to $\S^v_k$, illustrated as follows. Let $l$ is odd and let $\{(a_n,b_n)\}_{n \geq 0}$  (reps. $\{(a_n',b_n')\}_{n \geq 0}$) be the sequence of positions of $\W_{k,l}$  (reps. $\W_l$) whose nim-values are 1 such that $a_n \leq b_n$ (reps. $a'_n \leq b'_n$) and $a_i < a_j$ (reps. $a'_i < a'_j$) if $i < j$. Then $|a_n - a'_n| + |b_n - b'_n)| \leq 1$.


\section{The class $\T_k$}
\subsection{The winning strategy}

We state and prove the formula for the $\P$-positions of the game $\T_k$ for given $k$.

\smallskip
\begin{theorem}\label{Tk-P}
For each $k \geq 0$, the $\P$-positions of $\T_k$ form the set
$$\S^0_k=\{(0,0)\}\cup \{(A_n+1, B_n+1) | n \geq 0 \}.$$
\end{theorem}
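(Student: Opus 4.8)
The plan is to mimic the proof of Theorem~\ref{W_k-P}: set $\A=\{(0,0)\}\cup\{(A_n+1,B_n+1)\mid n\ge0\}$ and verify the two characterizing properties of the set of $\P$-positions, namely $(i)$ no move from a position in $\A$ terminates in $\A$, and $(ii)$ from every position not in $\A$ there is a move into $\A$. Property $(i)$ follows because $\T_k$ is a restriction of {\sc Wythoff}: the diagonal move of $\T_k$ only forbids certain {\sc Wythoff} moves (those making a pile empty or changing the ratio too much), and {\sc Wythoff} itself has no move between two of its $\P$-positions $(A_n,B_n)$; translating by $(1,1)$ does not create such a move either, since a diagonal move preserves the difference $B_n-A_n=n$, and a Nim move cannot go from $(A_n+1,B_n+1)$ to $(A_m+1,B_m+1)$ with the same difference unless $n=m$. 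The terminal position $(0,0)$ is handled separately: no move leaves $(0,0)$, and the only way to reach $(0,0)$ from $(A_n+1,B_n+1)$ would be $n=0$, i.e.\ from $(1,1)$, but the diagonal move from $(1,1)$ requires $a-s>0$, forcing $s=0$; so $(0,0)$ is unreachable from any other position in $\A$ as well (reducing a single pile of $(1,1)$ gives $(0,1)$, not $(0,0)$).

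The substantive direction is $(ii)$. Let $(a,b)\notin\A$ with $a\le b$. If $a=0$ then $b\ge1$ and reducing the larger pile to $0$ gives $(0,0)\in\A$ (if $b=1$ this is already in $\A$, contradiction, so $b\ge2$ — actually $(0,1)\notin\A$ too, fine). If $a=b$ with $a\ge1$, then since $(a,a)\notin\A$ we have... wait, here is the first place where the argument must diverge from Theorem~\ref{W_k-P}: in $\W_k$ one could drop straight down the diagonal to $(k,k)$, but in $\T_k$ there is no guarantee a diagonal move reaches a position of the form $(A_n+1,B_n+1)$, and positions $(a,a)$ with $a\ge2$ are $\N$-positions that must be handled by a Nim move. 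The correct approach is uniform: given $(a,b)\notin\A$ with $1\le a\le b$, consider $(a-1,b-1)$, which is not of the form $(A_n,B_n)$. By Theorem~\ref{W-P}, in {\sc Wythoff} there is a move from $(a-1,b-1)$ to some $(A_n,B_n)$. If this {\sc Wythoff} move is a Nim move (reducing one pile), the corresponding Nim move from $(a,b)$ reaches $(A_n+1,B_n+1)\in\A$ and we are done. The difficulty is the remaining case: the only {\sc Wythoff} move available from $(a-1,b-1)$ to a $\P$-position is a \emph{diagonal} one, and we must show this lifts to a legal diagonal move of $\T_k$ from $(a,b)$.

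So the heart of the proof is this lemma-like step: if the {\sc Wythoff} diagonal move $(a-1,b-1)\to(A_n,B_n)$ is forced (no Nim move works), show that removing the same amount $s$ from both piles of $(a,b)$ satisfies $a-s>0$ and $\bigl|\lfloor (b-s)/(a-s)\rfloor-\lfloor b/a\rfloor\bigr|\le k$. Here $a-s=A_n+1>0$ is automatic. For the ratio condition one should exploit that the {\sc Wythoff} diagonal move being the \emph{only} option is very restrictive: reducing $a-1$ to $A_n$ via a Nim move fails precisely when $(A_n,b-1)$ is an $\N$-position with no further structure — actually the cleaner fact is that from $(a-1,b-1)$ with $a-1\le b-1$, a diagonal move to the $\P$-position $(A_n,B_n)$ exists iff $(a-1)-(b-1)=(b-1)-(a-1)$... rather, $b-a=B_n-A_n=n$, so $n=b-a$ is determined, and the diagonal move is forced to be the unique one lowering the smaller pile; the Nim alternative (lowering $b-1$ to $A_{a-1}$ when $(a-1,b-1)$ has $a-1=A_m$ for some $m$, or lowering $a-1$) fails exactly when $a-1$ and $b-1$ are the two Beatty entries of different indices in an incompatible way. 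In that regime $b/a$ is close to $\phi^2\approx2.618$ and $(b-s)/(a-s)=(B_n+1)/(A_n+1)$ is also close to $\phi^2$, so both floors equal $2$ for all but finitely many $n$, giving difference $0\le k$; the finitely many small exceptional positions are checked by hand. I expect this ratio-bounding step — proving the forced {\sc Wythoff} diagonal move always respects the $\T_k$ ratio constraint (for every $k\ge0$, including $k=0$) — to be the main obstacle, and it will likely require the estimates $\lfloor B_n/A_n\rfloor=2$ for $n\ge1$ together with a short case analysis for small $n$ and for the positions with $a\le$ some small bound.
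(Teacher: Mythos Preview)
Your overall architecture is right and matches the paper: set $\A=\{(0,0)\}\cup\{(A_n+1,B_n+1)\mid n\ge0\}$, verify $(i)$ that no move stays in $\A$ (using that $\T_k$ is a restriction of {\sc Wythoff} and that the rule $a-s>0$ blocks the diagonal move $(1,1)\to(0,0)$), and for $(ii)$ translate down by $(1,1)$ and invoke the {\sc Wythoff} strategy on $(a-1,b-1)$. Up to here you and the paper agree.

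The gap is in the ratio step, and it is both simpler than you fear and different from what you guess. You conjecture that in the ``forced diagonal'' regime the ratios $b/a$ and $(B_n+1)/(A_n+1)$ are close to $\phi^2\approx 2.618$ with floor $2$, and that one must then handle finitely many exceptional small cases by hand. This is incorrect on two counts. First, $B_n/A_n\to\phi\approx 1.618$, not $\phi^2$; your asserted estimate $\lfloor B_n/A_n\rfloor=2$ already fails at $n=2$ (where $B_2/A_2=5/3$). Second, and more importantly, no asymptotics or case analysis are needed at all: whenever a diagonal move takes $(a,b)$ (with $a\le b$) to $(A_n+1,B_n+1)$, we have $b-a=B_n-A_n=n$, hence
\[
\frac{B_n+1}{A_n+1}=1+\frac{n}{A_n+1},\qquad \frac{b}{a}=1+\frac{n}{a},
\]
and since $n\le A_n<A_n+1$ and $n\le A_n<a$ (the latter because $a-s=A_n+1$ with $s\ge1$), both fractions lie in $[1,2)$. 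Thus $\lfloor(B_n+1)/(A_n+1)\rfloor=\lfloor b/a\rfloor=1$ exactly, the difference is $0\le k$ for every $k\ge0$, and the lifted diagonal move is always legal in $\T_k$. This is the paper's argument, and it also explains why the $\P$-positions are independent of $k$.

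Two smaller remarks. Your detour into ``only the diagonal move is available'' is unnecessary: the computation above applies to \emph{any} diagonal move to $(A_n+1,B_n+1)$, so you may simply take whichever {\sc Wythoff} move exists from $(a-1,b-1)$ and lift it. And your worry about the case $a=b$ is unfounded: for $a=b\ge2$ the shifted position $(a-1,a-1)$ admits the {\sc Wythoff} diagonal move to $(0,0)=(A_0,B_0)$, which lifts to $(a,a)\to(1,1)$ with ratio difference $|1-1|=0$.
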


\begin{remark} The striking feature of this result is that it is independent of $k$, quite unlike the result of Theorem~\ref{W_k-P}. In the process of the proof below, the reason for this feature will become clear.
\end{remark}
\begin{proof}

Let $\A = \{(0,0)\}\cup \{(A_n + 1, B_n + 1) |n \geq 0 \}$. It suffices to show that the following two properties hold for $\T_k$:
\begin{itemize}\itemsep0em
\item [$(i)$]  every move from a position in $\A$ cannot terminate in $\A$,
\item [$(ii)$] from every position not in $\A$, there is a move terminating in $\A$.
\end{itemize}

For $(i)$, the requirement $a>s$ implies that for all $k\geq 0$, no diagonal move can be made to $(0,0)$. In particular, the diagonal move  $(1,1)\to (0,0)$ cannot be made. Since there is no move between positions of the form $(A_n, B_n)$ in {\sc Wythoff} and the set of moves in $\T_k$ is a subset of that of {\sc Wythoff}, there is no move in $\T_k$ between positions of the form $(A_n+1, B_n +1)$.

For $(ii)$, let $p = (a,b)$ be a position not in $\A$. Set $q = (a-1,b-1)$. Then $q$ is not of the form $(A_n, B_n)$. Since there exists a legal move from $q$ to some $(A_n, B_n)$ in {\sc Wythoff}, there exists a move from $p$ to $(A_n +1, B_n+1)$ in $\T_k$, provided that if a diagonal move is taken then $| \lfloor (B_n + 1)/(A_n + 1) \rfloor - \lfloor b/a\rfloor | \leq k$. In fact, we now show that $\lfloor (B_n + 1)/(A_n + 1) \rfloor = \lfloor b/a\rfloor$, so the inequality holds for all $k$. This explains why the expression for the $\P$-positions is independent of $k$: Since $(a-1, b-1)\to (A_n, B_n)$ is also a diagonal move in {\sc Wythoff}, the move must satisfy $(b-1)-(a-1)=b-a=B_n-A_n=n$. Now $(B_n+1)/(A_n+1)=(A_n+n+1)/(A_n+1)=1+n/(A_n+1)$. Since $\phi>1$, $n<A_n+1$, so $\lfloor (B_n+1)/(A_n+1)\rfloor =1.$ Also $b/a= (a+n)/a=1+n/a$. If $a\to A_n$, then $n\le A_n<a$, and if $a\to B_n$, then $n\le A_n\le B_n<a$, so in either case $n<a$. Hence $\lfloor b/a\rfloor=\lfloor (B_n+1)/(A_n+1)\rfloor =1$.
\end{proof}


A comparison between Theorems~\ref{W_k-P} and \ref{Tk-P} immediately implies:

\begin{corollary}\label{PW1Tk}
The set of $\P$-positions of $\W_1$ is identical to the set of $\P$-positions of $\T_k$ for every $k\ge 0$.
\end{corollary}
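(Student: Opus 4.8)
The final statement is Corollary~\ref{PW1Tk}, which asserts that the $\P$-positions of $\W_1$ coincide with those of $\T_k$ for every $k\ge 0$.

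The plan is to simply read off both sets from the theorems already proved and observe that they are literally the same set. By Theorem~\ref{W_k-P} applied with $k=1$, the $\P$-positions of $\W_1$ form the set
\[
\S^0_1=\{(i,i)\mid 0\le i<1\}\cup\{(A_n+1,B_n+1)\mid n\ge 0\}=\{(0,0)\}\cup\{(A_n+1,B_n+1)\mid n\ge 0\},
\]
since the only integer $i$ with $0\le i<1$ is $i=0$. On the other hand, Theorem~\ref{Tk-P} states that for every $k\ge 0$ the $\P$-positions of $\T_k$ form exactly the set $\{(0,0)\}\cup\{(A_n+1,B_n+1)\mid n\ge 0\}$. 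These two descriptions are identical, so the corollary follows immediately.

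There is essentially no obstacle here: the corollary is a direct syntactic comparison of the two displayed formulas, one specialized at $k=1$ and the other already independent of $k$. The only thing to be careful about is the degenerate indexing in Theorem~\ref{W_k-P}, namely that $\{(i,i)\mid 0\le i<1\}$ is the singleton $\{(0,0)\}$ rather than empty or larger; once that is noted the match is exact. If one wanted a self-contained argument one could instead invoke the abstract principle that two games with the same terminal positions have the same $\P$-positions precisely when the "stability/reachability" conditions (as in the proofs of Theorems~\ref{W_k-P} and \ref{Tk-P}) hold for the common candidate set $\A=\{(0,0)\}\cup\{(A_n+1,B_n+1)\mid n\ge 0\}$ — but since both theorems have already verified exactly this for $\A$, no further work is needed.

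Accordingly I would present the corollary's proof in one or two lines: specialize Theorem~\ref{W_k-P} to $k=1$, note the singleton collapse, and compare with Theorem~\ref{Tk-P}.

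\begin{proof}
Setting $k=1$ in Theorem~\ref{W_k-P}, the set $\{(i,i)\mid 0\le i<1\}$ equals $\{(0,0)\}$, so the $\P$-positions of $\W_1$ form the set $\{(0,0)\}\cup\{(A_n+1,B_n+1)\mid n\ge 0\}$. By Theorem~\ref{Tk-P}, for every $k\ge 0$ the $\P$-positions of $\T_k$ form the same set. Hence the two sets coincide.
\end{proof}
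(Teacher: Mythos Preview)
Your proof is correct and matches the paper's own approach: the paper simply notes that the corollary follows immediately from comparing Theorems~\ref{W_k-P} (specialized to $k=1$) and \ref{Tk-P}, without writing out any further details. Your explicit observation that $\{(i,i)\mid 0\le i<1\}=\{(0,0)\}$ is the only thing worth spelling out, and you have done so.
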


It is rather rare that two games with different rule-sets have the same set of $\P$-positions. In fact, we have here a family of games that share the same $\P$-positions, since the rule sets of $\T_k$ are different for each $k$. Why does it happen here? Why is each $\P$-position of $\T_k$ but a translation by 1 of a $\P$-position of {\sc Wythoff} (except for $(0,0)$)? We end this section with some intuition about these questions.

Consider the game $\T_{\infty}$. This is the same as {\sc Wythoff}, except that the terminal position $(0,0)$ of {\sc Wythoff} cannot be reached with a diagonal move. The position $(1,1)$ is terminal in $\T_{\infty}$ -- for diagonal moves -- and so replaces the terminal position $(0,0)$ of {\sc Wythoff}. In the proof of $(ii)$ of Theorem~\ref{Tk-P}, the reason of the independence of the $\P$-positions of $k$ was explained. This independence  includes the case $k=\infty$. Thus the $\P$-positions $(A_n, B_n)$ of {\sc Wythoff} are translated into the $\P$-positions $(A_n+1, B_n+1)$ in $\T_k$. Note further that the rule-sets of $\W_1$ and $\T_{\infty}$ are identical. Therefore the two games have identical sets of $\P$-positions. But as pointed out in the previous paragraph, we indeed have an entire family of different rule-sets sharing the same set of $\P$-positions.

\subsection{On an invariance property of nim-values}
We state two properties of the Sprague-Grundy function of the class of games $\T_k$. First, the games $\T_k$, for different values $k$, share the same set of positions with nim-values 1. It is then conjectured that, for given $k$ and $l$, the two games $\T_k$ and $\T_l$ have the same sets of positions of nim-value $g$, provided that $g \leq \operatorname{min}(k, l)$.

\smallskip
\begin{theorem} \label{kW-1}
For all $k\ge 0$, the set of positions with nim-value $1$ in $\T_k$ is
$$\S^1_k = \{(0,1)\}\cup \{(A_n+2, B_n+2) | n \geq 0\}.$$
\end{theorem}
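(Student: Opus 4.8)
The plan is to follow the same four-step template that worked for Theorem~\ref{W_1-1}, but using the $\P$-position set of $\T_k$ from Theorem~\ref{Tk-P} and exploiting the ratio-invariance observation made in the proof of Theorem~\ref{Tk-P}. Set $\B=\{(0,1)\}\cup\{(A_n+2,B_n+2)\mid n\geq 0\}$, and recall $\S^0_k=\{(0,0)\}\cup\{(A_n+1,B_n+1)\mid n\geq 0\}$. It suffices to verify: $(i)$ $\B\cap\S^0_k=\emptyset$; $(ii)$ there is no move within $\B$; $(iii)$ from every position in $\B$ there is a move into $\S^0_k$; $(iv)$ from every position outside $\B\cup\S^0_k$ there is a move into $\B$. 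As in Theorem~\ref{W_1-1}, $(iii)$ is immediate: part $(ii)$ of the proof of Theorem~\ref{Tk-P} shows every position not in $\S^0_k$ has a move to $\S^0_k$, and by $(i)$ this applies to all of $\B$.

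For $(i)$, I would argue exactly as in part $(i)$ of Theorem~\ref{W_1-1}: if $(A_n+1,B_n+1)=(A_m+2,B_m+2)$ then either $A_n=A_m+1,\ B_n=B_m+1$, which subtracts to $n=m$, contradiction; or $A_n=B_m+1,\ B_n=A_m+1$, which subtracts to $n+m=0$ hence $n=m=0$, contradicting $A_0+1=1\neq 2=B_0+2$. (One also notes $(0,1)\notin\S^0_k$ trivially.) For $(ii)$: there is clearly no Nim move and no diagonal move landing on $(0,1)$ from $(A_n+2,B_n+2)$ with $n\geq 0$ other than possibly from $(2,3)=(A_0+2,B_0+2)$ via a diagonal move, but the diagonal move $(2,3)\to(0,1)$ removes $2$ tokens and requires $\lfloor 1/0\rfloor$ which is undefined, so the constraint $a-s>0$ forbids it. For a would-be move between $(A_n+2,B_n+2)$ and $(A_m+2,B_m+2)$: since the moves of $\T_k$ are a subset of {\sc Wythoff}'s, such a move would be a {\sc Wythoff} move between $(A_n+2,B_n+2)$ and $(A_m+2,B_m+2)$; but a diagonal {\sc Wythoff} move preserves the difference of the two coordinates, which here equals $n$ resp.\ $m$, so $n=m$; and a Nim move would change only one coordinate, impossible since $A_n+2=A_m+2$ forces $n=m$ and likewise $B_n+2=B_m+2$.

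The real work is $(iv)$. Let $(a,b)\notin\B\cup\S^0_k$, say $a\leq b$. If $a\in\{0,1\}$ then necessarily $b>a$ (else $(0,0)\in\S^0_k$ or $(1,1)$—check $(1,1)\notin\B$ and, if $(1,1)\in\S^0_k$ we are done differently; in any case for $a\le 1$ reduce $b$ to land on $(0,1)$, after checking $(0,1)$ is reachable, i.e.\ $b\ge 1$, with a Nim move from the $b$-pile). If $a=2$: note $(2,2)\notin\S^0_k$ in general and $(2,2)\notin\B$, but I must route $(2,2)$ separately — move $(2,2)\to(1,1)$ if $(1,1)\in\S^0_k$, or reduce to $(0,1)$? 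Here I would instead use: for $a\ge 2$, consider $p=(a-2,b-2)$. Since $(a,b)\notin\S^0_k$, $p$ is not of the form $(A_n,B_n)$, so by Theorem~\ref{W-P} there is a {\sc Wythoff} move from $p$ to some $(A_m,B_m)$, yielding the move $(a,b)\to(A_m+2,B_m+2)\in\B$ in {\sc Wythoff}. If this move is a Nim move it is automatically legal in $\T_k$. If it is diagonal, I must verify the ratio constraint $\bigl|\lfloor(B_m+2)/(A_m+2)\rfloor-\lfloor b/a\rfloor\bigr|\leq k$; as in the proof of Theorem~\ref{Tk-P}, the diagonal move from $p$ forces $b-a=B_m-A_m=m$, and then $\lfloor(B_m+2)/(A_m+2)\rfloor=\lfloor 1+m/(A_m+2)\rfloor=1$ since $m\le A_m<A_m+2$, while $\lfloor b/a\rfloor=\lfloor 1+m/a\rfloor=1$ provided $m<a$ — and $m<a$ follows because $a\to A_m$ gives $m\le A_m<A_m+2=a$ (the case $a-2=A_m$) or, if $a-2=B_m$ then $a=B_m+2>B_m\ge m$; either way $m<a$. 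Hence the ratio difference is $0\le k$, so the move is legal in every $\T_k$. The remaining low cases $a=2,3$ (where $a-2\in\{0,1\}$) I would handle by hand, reducing to $(0,1)$ or $(2,3)$ after checking those targets are not equal to $(a,b)$ and are actually reachable; this case-bookkeeping is the only fiddly part. The main obstacle, then, is not conceptual but making sure the small-coordinate boundary cases in $(iv)$ are all covered and that the ``$p=(a-2,b-2)$ not of the form $(A_n,B_n)$'' step is correctly justified from $(a,b)\notin\S^0_k$.
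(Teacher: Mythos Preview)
Your approach is correct and, in step $(iv)$, actually more careful than the paper's own proof. The paper argues very tersely: it observes that $\T_\infty$ and $\W_1$ have identical rule sets, so $(i)$ follows at once from Theorem~\ref{W_1-1}; $(ii)$ uses that $\T_k$-moves form a subset of {\sc Wythoff}'s moves; $(iii)$ is as you say; and for $(iv)$ the paper simply asserts that it ``holds for all $k$ since each $\T_k$ is an extension of $\T_\infty$.'' That last clause is backwards---each $\T_k$ is a \emph{restriction} of $\T_\infty$, so knowing a move exists in $\T_\infty=\W_1$ does not automatically place it in $\T_k$. What is really needed is exactly your explicit verification that the relevant diagonal move to $(A_m+2,B_m+2)$ has ratio difference $0$, hence is legal in every $\T_k$. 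So your route and the paper's intended route coincide in spirit, but you supply the missing justification.

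Two small corrections to your write-up. First, in $(iv)$ the reason $p=(a-2,b-2)$ is not of the form $(A_n,B_n)$ is that $(a,b)\notin\B$, not that $(a,b)\notin\S^0_k$ as you wrote; indeed $(A_n+2,B_n+2)\in\B$ while $(A_n+2,B_n+2)\notin\S^0_k$, so the implication you stated is false. Second, your hedging about the low cases $a\in\{2,3\}$ is unnecessary: the general argument for $a\ge 2$ already covers them, and in particular your claim ``$(2,2)\notin\B$'' is wrong since $(2,2)=(A_0+2,B_0+2)\in\B$, so $(2,2)$ never even arises in $(iv)$. With these two fixes your proof is complete and self-contained.
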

\begin{note}
This set is identical with $\S_1^1$ for $\W_1$ of Theorem~\ref{W_1-1}.
\end{note}

\begin{proof}
As in the proof of Theorem~\ref{W_1-1}, put $\B=\{(0,1)\}\cup \{(A_n+2, B_n+2) | n \geq 0\}$. Recall that the set of $\P$-positions of $\T_k$ is
$$\S^0_k = \{(0,0)\}\cup \{(A_n+1, B_n+1)| n \geq 0\}.$$
Analogously to the proof of Theorem~\ref{W_1-1}, it suffices to prove the following properties:
\begin{itemize}\itemsep0em
\item [$(i)$] $\B \cap \S^0_k = \emptyset$,
\item [$(ii)$] there is no move from a position in $\B$ to a position in $\B$,
\item [$(iii)$] from every position in $\B$ there is a move to $\S_k^0$.
\item [$(iv)$] from every position not in $\B \cup\S^0_k$, there exists a move to some position in $\B$.
\end{itemize}
As pointed out at the end of section~3.1, the games $\T_{\infty}$ and $W_1$ are identical. By Theorem \ref{W_1-1}, $\B = S^1_1$. Therefore, $(i)$ holds. For $(ii)$, note that the set of moves in $\T_k$ is a subset of that in {\sc Wythoff}. As there is no move between positions of the form $(A_n, B_n)$ in {\sc Wythoff}, there is no move between positions of the form $(A_n +2, B_n +2)$ in $\B$. Item $(iii)$ follows a fortiori from $(ii)$ in the proof of Theorem~\ref{Tk-P}. Finally, $(iv)$ holds for all $k$ since each $\T_k$ is an extension of $\T_{\infty}$.
\end{proof}

Denote by $\S_k^g(\T)$ the set of positions with nim-value $g$ in $\T_k$ and by $\S_k^g(\W)$ the set of positions with 
nim-value $g$ in $\W_k$. Corollary~\ref{PW1Tk} states that $\S_1^0(\W)=\T_k^0(\T)$ for all $k\ge 0$; Theorems~\ref{W_1-1} and \ref{kW-1} show that $\S_1^1(\W)=\S_k^1(\T)$ for all $k\ge 0$. These results seem to suggest that $S_1^g(\W)=\S_k^g(\T)$ for all $g\ge 0$ and all $k\ge 0$. However, there are counterexamples. Thus, for the position $(20,30)$ we have $g(20,30)=38$ in $\W_1$, but $g(20,30)=2$ in $\T_1$. Since it seems, however, that $g(20,30)=38$ in $\T_k$ for all $k\ge 38$, we are led to the following

\begin{conjecture}\label{k-infty}
Let $k$ be a nonnegative integer. Then
\begin{itemize}
\item $\S_k^g(\T) = \S_{\infty}^g(T)$ for all $0\le g\le k$.
\item $\S_1^g(\W)=\S_k^g(\T)$ for all $0\le g\le k$.
\end{itemize}
\end{conjecture}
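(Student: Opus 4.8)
The plan is to reduce Conjecture~\ref{k-infty} to a single estimate for the Sprague-Grundy function of $\T_\infty$. The second bullet will follow from the first: as recorded at the end of Section~3.1, the games $\T_\infty$ and $\W_1$ have the \emph{same} rule-set, so $\S_\infty^g(\T)=\S_1^g(\W)$ for all $g\ge 0$; hence $\S_k^g(\T)=\S_\infty^g(\T)$ for $0\le g\le k$ at once yields $\S_1^g(\W)=\S_k^g(\T)$ for $0\le g\le k$. So it suffices to prove $\S_k^g(\T)=\S_\infty^g(\T)$ for $0\le g\le k$. Write $g_k$ and $g_\infty$ for the nim-values in $\T_k$ and $\T_\infty$ respectively. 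Every move of $\T_k$ is a move of $\T_\infty$, and the options of a position $(a,b)$ with $a\le b$ in $\T_k$ are obtained from its options in $\T_\infty$ by deleting exactly the \emph{far-diagonal} moves, i.e.\ the diagonal moves $(a,b)\to(a-s,b-s)$ with $\lfloor(b-s)/(a-s)\rfloor-\lfloor b/a\rfloor>k$. Since $(b-s)/(a-s)=1+(b-a)/(a-s)$ is increasing in $s$, any such move lands on a position whose ratio satisfies $\lfloor(b-s)/(a-s)\rfloor\ge\lfloor b/a\rfloor+k+1\ge k+2$, where we used $b\ge a\ge 1$.

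The key input I would isolate is:
\medskip

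\noindent $(\star)$\quad For all integers $1\le c\le d$ one has $g_\infty(c,d)\ge\lfloor d/c\rfloor-1$; equivalently, every position of $\T_\infty$ whose nim-value is at most $k$ has $\lfloor d/c\rfloor\le k+1$.
\medskip

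\noindent Granting $(\star)$, the equality $\S_k^g(\T)=\S_\infty^g(\T)$ for $g\le k$ follows by induction on $a+b$, carrying the invariant: for every $v\le k$, $(a,b)\in\S_k^v(\T)$ iff $(a,b)\in\S_\infty^v(\T)$ — equivalently, $g_k(a,b)=g_\infty(a,b)$ when $g_\infty(a,b)\le k$, and $g_k(a,b)>k$ otherwise. Indeed, by $(\star)$ and the previous paragraph every deleted (far-diagonal) option $q$ of $(a,b)$ has $g_\infty(q)\ge k+1$, so $g_k(q)>k$ by the induction hypothesis; thus passing from $\T_\infty$ to $\T_k$ removes from the option list of $(a,b)$ only positions whose $\T_k$-nim-value exceeds $k$, so it does not alter the set of values $\le k$ occurring among option-nim-values. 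A two-case mex computation — according as $g_\infty(a,b)\le k$ or $g_\infty(a,b)>k$, using the induction hypothesis on the surviving (strictly smaller) options — then closes the induction; this bookkeeping is routine.

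The real content is the proof of $(\star)$, which I expect to be the main obstacle. I would attempt it by induction on $c+d$. If $r:=\lfloor d/c\rfloor\le 2$ the bound is trivial or amounts to checking that $(c,d)$ is not a $\P$-position of $\T_\infty$, which is immediate from Theorem~\ref{Tk-P} (the $k=\infty$ case). For $r\ge 3$ one must exhibit, among the options of $(c,d)$, positions of nim-value $0,1,\dots,r-2$. Observe that every diagonal option $(c-t,d-t)$ and every option obtained by reducing the smaller pile has nim-value $\ge r-1$ by the induction hypothesis (each such position either has ratio $\ge r$, or is the Nim pile $(0,d)$ with $d\ge r$), so none of them can produce a value below $r-1$: all the required small values must be realised by Nim moves that reduce the \emph{larger} pile. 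Using the complementarity of $\{A_n\}$ and $\{B_n\}$ (Lemma~\ref{Comp}) together with the description of the $\P$-positions of $\T_\infty$ (Theorem~\ref{Tk-P}) and of its positions of nim-value $1$ (Theorem~\ref{kW-1}), one checks directly that reducing the larger pile of $(c,d)$ always reaches a position of nim-value $0$ and one of nim-value $1$.

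The remaining values $2,3,\dots,r-2$ are where I expect the difficulty to lie, since there is no closed form for the positions of $\T_\infty=\W_1$ of nim-value $\ge 2$. What is needed is a structural statement to the effect that, along the column with fixed smaller pile $c$, the nim-values of $\T_\infty$ already realise every integer below $\lfloor d/c\rfloor-1$ among the positions with larger pile $<d$. Establishing this will presumably require strengthening the induction hypothesis for $(\star)$ — for instance, controlling, for each $c$, the least $d$ with $g_\infty(c,d)=v$ and showing it is at most $(v+2)c-1$, a statement dual to $(\star)$ that one would prove simultaneously with it — or else relating the Sprague-Grundy function of $\W_1$ to that of {\sc Wythoff}. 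This is the step I would expect to resist a short argument.
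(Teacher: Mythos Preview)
The paper does not prove this statement: it is stated as Conjecture~\ref{k-infty}, supported only by numerical evidence (the discussion of the position $(20,30)$), and the authors explicitly write that a proof method for it would likely also settle Conjectures~\ref{W_{k,l}-g} and~\ref{kW-g}. There is therefore no ``paper's proof'' to compare with.

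Your reduction is sound as far as it goes. The identification $\T_\infty=\W_1$ makes the second bullet an immediate consequence of the first, and your inductive argument is correct: the diagonal options of $(a,b)$ discarded when passing from $\T_\infty$ to $\T_k$ all land on positions with $\lfloor(b-s)/(a-s)\rfloor\ge\lfloor b/a\rfloor+k+1\ge k+2$, so if $(\star)$ holds they have $g_\infty$-value $\ge k+1$; the induction hypothesis then forces their $g_k$-value $>k$ as well, and the mex computation in the two games agrees on all values $\le k$. So the conjecture does reduce to your lower bound $(\star)$ on the Sprague--Grundy function of $\W_1$.

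The gap is that $(\star)$ is not proved, and you acknowledge this. Your inductive attack on $(\star)$ stalls at exactly the point you identify: for $r=\lfloor d/c\rfloor\ge 4$, all options of $(c,d)$ other than Nim moves on the larger pile have ratio $\ge r$ and hence (by induction) nim-value $\ge r-1$, so the values $2,3,\dots,r-2$ must all be realised within the single column $\{(c,d'):d'<d\}$. The paper gives closed forms only for $\S^0$ and $\S^1$ of $\W_1$ and explicitly remarks (after Corollary to Theorem~\ref{W_k-1}) that no such description is available for $g\ge 2$; your proposed strengthening --- bounding the least $d$ with $g_\infty(c,d)=v$ by $(v+2)c-1$ --- is another unproven statement of the same flavour as $(\star)$. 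What you have written is a clean reformulation of the conjecture as a growth estimate for a single Sprague--Grundy function, not a proof; the essential difficulty is untouched, and the statement remains open.
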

Here is a related
\smallskip
\begin{conjecture} \label{kW-g}
Let $k$ and $l$ be nonnegative integers. For every integer $g$ in the range $0\le g \leq \operatorname{min}(k,l)$, we have $\S^g_k = \S^g_l$.
\end{conjecture}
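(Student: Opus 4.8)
The plan is to reduce the statement, for each fixed $g$, to a self-contained combinatorial claim about the sets $\S^0_k, \S^1_k, \ldots, \S^g_k$ and then prove that claim by induction on $g$. Concretely, fix $k \leq l$ and suppose inductively that $\S^i_k = \S^i_l$ for all $i < g$, where $g \leq k$; call this common value $\S^{<g}$. I would then show $\S^g_k = \S^g_l$ by the usual ``$\P$-position style'' characterization: a set $\F$ disjoint from $\S^{<g}$ equals the set of $g$-valued positions of a game $\G$ if and only if (a) every position of $\F$ has all of $\S^{<g}$ strictly below it reachable (i.e.\ $0,1,\ldots,g-1$ all appear among the nim-values of its options), (b) no move inside $\G$ goes from $\F$ to $\F$, and (c) from every position outside $\F \cup \S^{<g}$ that already has $0,\ldots,g-1$ among its option-values, there is a move into $\F$. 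Taking $\F$ to be $\S^g_l$ (known to satisfy these for $\G = \W_l$), I want to verify the same three items for $\G = \W_k$; symmetrically one gets the reverse inclusion, or one argues by the minimality/mex definition directly that the two games force the same $g$-set.

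The key structural fact that makes this go through is that the rule sets of $\W_k$ and $\W_l$ differ only in diagonal moves, and only in the ``low corner'': a diagonal move $(a,b)\to(a-m,b-m)$ is legal in $\W_k$ but not in $\W_l$ exactly when $k \leq \min(a-m,b-m) < l$. So the first step is a lemma pinning down where $\S^g_k$ lives: by an easy induction using Theorems~\ref{W_k-P}, \ref{W_1-1}, \ref{W_k-1} and the mex definition, every position in $\S^i_k$ with $i \leq g \leq k$ has its smaller coordinate $\geq$ something like $k - i$ (the $\P$-positions occupy the corner $(i,i)$, $0\le i<k$; the $1$-positions sit two steps out; etc.), so that in the region relevant to computing $\S^g_k$ the only diagonal moves that could possibly be ``new'' in $\W_k$ versus $\W_l$ land at positions whose nim-value is $<g$ in at least one of the two games — hence those moves cannot change which positions receive value exactly $g$. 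Making this precise — showing that the extra diagonal moves available in $\W_k$ but not in $\W_l$ always go to positions of nim-value strictly less than $g$, so that they neither create a $\F\to\F$ move nor are needed for the mex to reach $g$ — is the crux.

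I would organize the induction so that the statement proved at level $g$ is the conjunction ``$\S^i_k = \S^i_l$ for all $i \leq g$, and moreover every position in $\S^i_k$ ($i\le g$) has small coordinate $\geq k-i$ (with the diagonal positions $(A_n+k, B_n+k)$ etc.\ as the extremal cases)''; the coordinate bound is what feeds the next step. The base case $g=0$ is Theorem~\ref{W_{k,l}-P} specialized (or rather its analogue: for $g=0$ the statement is that $\S^0_k$ and $\S^0_l$ agree — which they do not in general! so in fact the induction must start at the first $g$ for which they do agree, and the honest reading of the conjecture is presumably $g \geq$ some threshold, or the conjecture intends $k,l$ with the shared-corner positions identified). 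Assuming the intended reading, the step $g-1 \to g$ is where essentially all the work is: one must show the extra low-corner diagonal moves are ``$g$-irrelevant.'' I expect that to be the main obstacle, because it requires controlling nim-values $2, 3, \ldots, g-1$ simultaneously, and for $\W_k$ no closed form for $\S^g_k$ with $g \geq 2$ is known (as the authors note at the end of Section~2.2); so the argument has to be purely structural — arguing about which moves can exist and where they land — rather than computational. If that structural ``$g$-irrelevance'' lemma resists a direct proof, a fallback is to prove the weaker monotone statement that $\S^g_k \subseteq \S^g_l \cup \S^{<g}_l$ and its reverse, by tracking a single potential value through one move at a time.
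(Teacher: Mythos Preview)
The statement you are addressing is a \emph{conjecture}; the paper offers no proof of it, so there is nothing in the paper to compare your argument against. More importantly, you have misidentified the game family. Conjecture~\ref{kW-g} sits in Section~3, which treats the class $\{\T_k\}$, and the sets $\S^g_k$ there refer to nim-value-$g$ positions of $\T_k$, not of $\W_k$. Your entire outline is built around the rule sets of $\W_k$ and $\W_l$ (``a diagonal move is legal in $\W_k$ but not in $\W_l$ exactly when $k\le\min(a-m,b-m)<l$''), and this is precisely why your base case collapsed: you correctly observe that $\S^0_k\ne\S^0_l$ for distinct $k,l$ in the $\W$-family, and then try to explain the conjecture away as mis-stated. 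It is not mis-stated; for $\T_k$ the $\P$-positions \emph{are} independent of $k$ (Theorem~\ref{Tk-P}) and so are the $1$-positions (Theorem~\ref{kW-1}), so the cases $g=0,1$ hold outright.

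Even after correcting the game family, what you have written is a plan rather than a proof. The load-bearing step --- showing that the diagonal moves present in one of $\T_k$, $\T_l$ but absent in the other land only on positions of nim-value strictly below $g$ --- is essentially the content of the conjecture, and you yourself acknowledge you do not know how to carry it out (``I expect that to be the main obstacle\ldots no closed form\ldots is known''). For $\T_k$ the discrepancy in moves is governed by the floor-ratio condition $|\lfloor (b-s)/(a-s)\rfloor - \lfloor b/a\rfloor|\le k$, not by a simple coordinate threshold, so the ``small coordinate $\ge k-i$'' lemma you propose has no obvious analogue. The paper explicitly leaves this open and links it to Conjectures~\ref{W_{k,l}-g} and~\ref{k-infty}.
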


Note that Conjectures~\ref{kW-g}, \ref{k-infty} and \ref{W_{k,l}-g} are related. We believe that a proof method of either of them would lead to the proof of the other two.

\section{Conclusion}
We found cases when translations of $\P$-positions of {\sc Wythoff}'s $\P$-positions are $\P$-positions of games ``close'' to {\sc Wythoff}. There are some further directions of study on the theme of this translation. We list here two such questions.

\smallskip
\begin{question}
Does there exist a variant of {\sc Wythoff} whose $\P$-positions, except possibly a finite number, are  $(A_n - k, B_n - k)$ for some fixed $k \geq 1$?
\end{question}

More generally,
\smallskip
\begin{question}
Does there exist a variant of {\sc Wythoff} whose $\P$-positions, except possibly a finite number, are  $(A_n + k, B_n + l)$ for some fixed integers $k\neq l$?
\end{question}



\small

\end{document}